\newtheorem{thm}{Theorem}[section]
\newtheorem{prob}{Problem}[section]
\newtheorem{assumption}{Assumption}[section]
\newtheorem{remark}{Remark}[section]
\newtheorem{prop}{Proposition}[section]
\newtheorem{defn}{Definition}[section]
\newtheorem{example}{Example}[section]
\newcounter{nextauthor}
\def\mathrm{\mbox}
\numberwithin{remark}{section}
\begin{document}
\title{{\Large \bf A stochastic optimal control problem governed by SPDEs via a spatial-temporal interaction operator}\thanks{This work was supported by the National Natural Science Foundation of China (11471230, 11671282).}}
\author{Zhun Gou, Nan-jing Huang\footnote{Corresponding author.  E-mail addresses: nanjinghuang@hotmail.com; njhuang@scu.edu.cn}, Ming-hui Wang and Yao-jia Zhang\\
{\small\it Department of Mathematics, Sichuan University, Chengdu, Sichuan 610064, P.R. China}}
\date{}
\maketitle
\begin{center}
\begin{minipage}{5.5in}
\noindent{\bf Abstract.} In this paper,  we first introduce a new spatial-temporal interaction operator to describe the space-time dependent phenomena. Then we consider the stochastic optimal control of a new system governed by a stochastic partial differential equation with the spatial-temporal interaction operator. To solve such a stochastic optimal control problem, we derive an adjoint backward stochastic partial differential equation with spatial-temporal dependence by defining a Hamiltonian functional, and give both the sufficient and necessary (Pontryagin-Bismut-Bensoussan type) maximum principles.
Moreover, the existence and uniqueness of solutions are proved for the corresponding adjoint backward stochastic partial differential equations. Finally, our results are applied to study the population growth problems with the space-time dependent phenomena.
\\ \ \\
{\bf Keywords:} Stochastic partial differential equation; Spatial-temporal dependence; Spatial-temporal interaction operator; Stochastic optimal control problem; Maximum principle.
\\ \ \\
{\bf 2010 Mathematics Subject Classification}: 60H10, 60J75, 91B70, 92D25, 93E20.
\end{minipage}
\end{center}

\section{Introduction}
\paragraph{}
In last decades, many scholars have focused on the topic of stochastic partial differential equations (SPDEs), which has many real world applications \cite{holden1996stochastic, liu2016analysis, ma1997Adapted, mijena2016intermittence}. In this paper, we consider a stochastic optimal control problem governed by a new SPDE with a spatial-temporal interaction operator which can be used to describe the space-time dependent phenomena appearing in population growth problems. To explain the motivations of our work, we first recall some recent works concerning on the stochastic optimal control problems governed by SPDEs.

In 2005, {\O}ksendal \cite{Oksendal2005optimal} studied the stochastic optimal control problem governed by the SPDE, proved a sufficient maximum principle for the problem, and  applied the results to solve the optimal harvesting problem described by the SPDE without the time delay.
However, there are many models with past dependence in realistic world, in which the optimal control problems governed by some dynamic systems with time delays have more practical applications. For example, for biological reasons, time delays occur naturally in population dynamic models \cite{Mohammed1998Stochastic, Oksendal2011optimal}. Therefore, when dealing with optimal harvesting problems of biological systems, one can be led to the optimal control problems of the systems with time delays. Motivated by this fact, {\O}ksendal  et al. \cite{oksendal2012Optimal} investigated the stochastic optimal control problem governed by the delay stochastic partial differential equation (DSPDE), established both sufficient and necessary stochastic maximum principles for this problem, and illustrated their results by an application to the optimal harvesting problem from a biological system. Besides, we note that another area of applications is mathematical finance, where time delays in the dynamics can represent memory or inertia in the financial system (see, for example, \cite{agram2019stochastic}). Some other applications, we refer the reader to \cite{basse2018multivariate, Gopalsamy2013Stability, Kocic2010Generalized, meng2015optimal, Mokkedem2019Optimal} and the references therein.

On the other hand, it is equally important to study the stochastic optimal control problem governed by dynamic system with the spatial dependence because it also has many applications in real problems such as the harvesting problems of biological systems \cite{hening2018stochastic, Schreiber2009Invasion}. To deal with the problems, Agram et al. \cite{agram2019spdes} introduced the space-averaging operator and considered a system of the SPDE with this type operator. Then they proved both sufficient and necessary stochastic maximum principles for the problem governed by such an SPDE and applied the results to solve the optimal harvesting problem for a population growth system in an environment with space-mean interactions. Following \cite{agram2019spdes}, Agram  et al. \cite{Agram2019Singular} also solved a singular control problem of optimal harvesting from a fish population, of which the density is driven by the SPDE with the space-averaging operator. For some related works concerned with the optimal control problems for SPDEs, we refer the reader to \cite{bensoussan2004stochastic, Da2014Stochastic,  Dumitrescu2018Stochastic,  Fuhrman2016Stochastic, Hu1990Maximum, lu2015Stochastic, Wu2019Boundary}.

Now, a natural question arises: can we describe both of the past dependence and the space-mean dependence in the same framework? Moreover, the question is generalized as follows: can we describe the spatial-temporal dependence of the state in the stochastic system? To this end, we construct the spatial-temporal interaction operator. Then, we consider the stochastic optimal control problem in which the state is governed by the new system of the SPDE with this operator in the filtered probability space $(\Omega,\mathscr{F},\mathscr{F}_t,\mathbb{P})$ satisfying the usual hypothesis. This system takes the following form:
\begin{equation}\label{SDE}
\begin{cases}
dX(t,x)&=\left(A_xX(t,x)+b(t,x)-u(t,x)\right)dt+\sigma(t,x)dB_t+\int_{\mathbb{R}_0}\gamma(t,x,\zeta)\widetilde{N}(dt,d\zeta),\\
&\qquad\qquad\qquad\qquad\qquad\qquad\qquad\qquad\qquad\qquad\;(t,x)\in[0,T]\times D;\\
X(t,x)&=\xi(t,x), \qquad\qquad\qquad\qquad\qquad\qquad\,\qquad\qquad (t,x)\in(0,T]\times \partial D;\\
X(t,x)&=\eta(t,x), \qquad\qquad\qquad\qquad\qquad\qquad\,\qquad\qquad (t,x)\in[-\delta,0]\times \overline{D};\\
u(t,x)&=\beta(t,x), \qquad\qquad\qquad\qquad\qquad\qquad\qquad\,\qquad (t,x)\in[-\delta,0]\times \overline{D},
\end{cases}
\end{equation}
where $dX(t,x)$ is the differential with respect to $t$, $u(t,x)$ is the control process and $D\subset \mathbb{R}^d$ is an open set with $C^1$ boundary $\partial D$. Moreover, $\overline{D}=D\bigcup \partial D$. We have used simplified notations in equation \eqref{SDE} such that
\begin{align*}
b(t,x)&=b(t,x,X(t,x),\overline{X}(t,x),u(t,x),\overline{u}(t,x)),\\
\sigma(t,x)&=\sigma(t,x,X(t,x),\overline{X}(t,x),u(t,x),\overline{u}(t,x)),\\
\gamma(t,x,\zeta)&=b(t,x,\zeta,X(t,x)),\overline{X}(t,x),u(t,x),\overline{u}(t,x)).
\end{align*}
where $\overline{X}(t,x)$ denotes the space-time dependent density.

Consequently, we focus on the study of the following stochastic optimal control problem which captures the spatial-temporal dependence.
\begin{prob}\label{problem}
Suppose that the performance functional associated to the control $u\in \mathcal{U}^{ad}$ takes the form
\begin{equation*}
J(u)=\mathbb{E}\left[\int_0^T\int_Df(t,x,X(t,x),\overline{X}(t,x),u(t,x),\overline{u}(t,x))dxdt+\int_Dg(x,X(T,x))dx\right],
\end{equation*}
where $X(t,x)$ is described by \eqref{SDE}, $f$ and $g$ are two given functions satisfying some mild conditions, and $\mathcal{U}^{ad}$ is the set of all admissible control processes.
The problem is to find the optimal control $\widehat{u}=\widehat{u}(t,x) \in \mathcal{U}^{ad}$ such that
\begin{equation}\label{prob}
J(\widehat{u})=\sup \limits_{u\in\mathcal{U}^{ad}}J(u).
\end{equation}
\end{prob}

The rest of this paper is structured as follows. The next section introduces some necessary preliminaries including the definition of the spatial-temporal interaction operator, and derives an adjoint backward stochastic partial differential equation (BSPDE) with spatial-temporal dependence by defining a Hamiltonian functional. In Section 3, the sufficient and necessary maximum principles of the related control problem are derived, respectively. In Section 4, the existence and uniqueness of solutions are obtained for the related BSPDE of the control problem with the spatial-temporal interaction operator. Finally, two examples are presented in Section 5 as applications of our main results.

\section{Preliminaries}
In this section, some necessary definitions and propositions are given to state \eqref{SDE} in detail. We also give several examples to show that all these definitions are well-posed.

Now, in \eqref{SDE}, the terms $B_t$ and $\widetilde{N}(dt,d\zeta)$ denote a one-dimensional $\mathcal{F}_t$-adapted Brownian motion and a compensated Poisson random measure, respectively, such that
$$
\widetilde{N}(dt,d\zeta)={N}(dt,d\zeta)-\nu(dt,d\zeta),
$$
where ${N}(dt,d\zeta)$ is a Poisson random measure associated with the one-dimensional $\mathcal{M}_t$-adapted Poisson process $P_N(t)$ defined on $\mathbb{R}_0=\mathbb{R}\setminus \{0\}$ with the characteristic measure $\nu(dt,d\zeta)$. Here, $B_t$ and $P_N(t)$ are mutually independent. Moreover, $\sigma$-algebras $\mathcal{F}=(\mathcal{F}_t)_{t\geq0}$ and $\mathcal{M}=(\mathcal{M}_t)_{t\geq0}$ are right-continuous and increasing. The augmented $\sigma$-algebra $\mathscr{F}_t$ is generated by
$$
\mathscr{F}_t=\sigma\left(\mathcal{F}_t\vee\mathcal{M}_t\right).
$$
We extend $X(t,x)$ to the process on $[0,T]\times \mathbb{R}^d$ by setting
$$
X(t,x)=0,\quad (t,x)\in [-\delta,T]\times \mathbb{R}^d\setminus \overline{D}.
$$

Next, we recall some useful sets and spaces which will be used throughout this paper.
\begin{defn}${}$
\begin{itemize}
\item $H=L^2(D)$ is the set of all Lebesgue measurable functions $f:D\rightarrow\mathbb{R}$ such that
$$
\|f\|_{H}:=\left(\int_{D}|f(x)|^2dx\right)^{\frac{1}{2}}<\infty,\quad x\in D.
$$
In addition, $\langle f(x),g(x)\rangle_H=\int_{D}f(x)g(x)dx$ denotes the inner product in $H$.

\item $\mathcal{R}$ denotes the set of Lebesgue measurable functions $r:\mathbb{R}_0\times D\rightarrow \mathbb{R}$. $L^2_{\nu}(H)$ is the set of all Lebesgue measurable functions $\gamma\in \mathcal{R}$ such that
$$
\|\gamma\|_{L^2_{\nu}(H)}:=\left(\int_{D}\int_{\mathbb{R}_0}|\gamma(x,\zeta)|^2\nu(d\zeta) dx\right)^{\frac{1}{2}}<\infty,\quad x\in D.
$$

\item $H_{T}=L^2_{\mathscr{F}}([0,T]\times \Omega,H)$ is the set of all $\mathscr{F}$-adapted processes $X(t,x)$ such that
$$
\|X(t,x)\|_{H_T}:=\mathbb{E}\left(\int_{D}\int_0^T|X(t,x)|^2dtdx\right)^{\frac{1}{2}}<\infty.
$$

\item $H^{-\delta}_{T}=L^2_{\mathscr{F}}([-\delta,T]\times \Omega,H)$ is the set of all $\mathscr{F}$-adapted processes $X(t,x)$ such that
$$
\|X(t,x)\|_{H^{-\delta}_T}:=\mathbb{E}\left(\int_{D}\int_{^{-\delta}}^T|X(t,x)|^2dtdx\right)^{\frac{1}{2}}<\infty.
$$

\item $V=W^{1,2}(D)$ is a separable Hilbert space (the Sobolev space of order $1$) which is  continuously, densely imbedded in $H$. Consider the topological dual of $V$ as follows:
$$
V\subset H\cong H^{*}\subset V^{*}.
$$
In addition, let $\langle A_xu,u\rangle_{*}$ be the duality product between $V$ and $V^{*}$, and $\|\cdot\|_V$ the norm in the Hilbert space $V$.

\item $\mathcal{U}^{ad}$ is the set of all stochastic processes which take values in a convex subset $\mathcal{U}$ of $\mathbb{R}^d$ and are adapted to a given subfiltration $\mathbb{G}=(\mathcal{G}_t)_{t\geq0}$. Here, $\mathcal{G}_t\subseteq \mathscr{F}_t$ for all $t\geq0$. Moreover, $\mathcal{U}^{ad}$ is called the set of admissible control processes $u$.
\end{itemize}
\end{defn}

\begin{defn}
The adjoint operator $A_x^{*}$ of a linear operator $A_x$ on $C_0^{\infty}(\mathbb{R}^d)$ is defined by
$$
\langle A_x\phi,\psi\rangle_{L^2(\mathbb{R}^d)}=\langle \phi,A_x^{*}\psi\rangle_{L^2(\mathbb{R}^d)},\quad \forall \phi,\psi\in C_0^{\infty}(\mathbb{R}^d).
$$
Here, $\langle \phi_1,\phi_2\rangle_{L^2(\mathbb{R}^d)}=\int_{\mathbb{R}^d}\phi_1(x)\phi_2(x)dx$ is the inner product in $L^2(\mathbb{R}^d)$. If $A_x$ is the second order partial differential operator acting on $x$ given by
$$
A_x\phi=\sum \limits_{i,j=1}^n \alpha_{ij}(x)\frac{\partial^2 \phi}{\partial x_i\partial x_j}+\sum\limits_{i=1}^n\beta_{i}(x)\frac{\partial \phi}{\partial x_i},\quad \forall \phi\in C^2(\mathbb{R}^d),
$$
where $(\alpha_{ij}(x))_{1\leq i,j\leq n}$ is a given nonnegative definite $n\times n$ matrix with entries $\alpha_{ij}(x)\in C^2(D)\bigcap C(\overline{D})$ for all $i,j=1,2,\ldots, n$ and $\beta_{i}(x)\in C^2(D)\bigcap C(\overline{D})$ for all $i=1,2,\ldots, n$,  then it is easy to show that
$$
A_x^{*}\phi=\sum \limits_{i,j=1}^n \frac{\partial^2 }{\partial x_i\partial x_j}(\alpha_{ij}(x)\phi(x))-\sum\limits_{i=1}^n\frac{\partial}{\partial x_i}(\beta_{i}(x)\phi(x)),\quad \forall \phi\in C^2(\mathbb{R}^d).
$$
\end{defn}

We interpret $X(t,x)$ as a weak (variational) solution to \eqref{SDE}, if for $t\in[0,T]$ and all $\phi\in C_0^{\infty}(D)$, the following equation holds.
\begin{align}\label{+1}
\langle X(t,x),\phi\rangle_H=&\langle \beta(0,x),\phi\rangle_H+\int_0^t\langle X(s,x),A_x^{*}\phi\rangle_{*}ds+\int_0^t\langle b(s,X(s,x)),\phi\rangle_Hds\nonumber\\
&+\int_0^t\langle \sigma(s,X(s,x)),\phi\rangle_HdB_s+\int_0^t\int_{\mathbb{R}_0}\langle \gamma(s,X(s,x),\zeta),\phi\rangle_Hd\widetilde{N}(s,\zeta).
\end{align}
In equation \eqref{+1}, these coefficients $b$, $\sigma$ and $\gamma$ are all the simplified notations.

Now, we give the definition of the spatial-temporal interaction operator.
\begin{defn}\label{+3}
 $S$ is said to be a spatial-temporal interaction operator if it takes the following form
\begin{eqnarray}\label{space-averaging}
S(X(t,x))=\int_{R_{\theta}}\int_{t-\delta}^tQ(t,s,x,y)X(s,x+y)dsdy\quad (X(t,x)\in H^{-\delta}_{T}),
\end{eqnarray}
where $Q(t,s,x,y)$ denotes the density function such that
\begin{equation}\label{+2}
\int_{y-R_{\theta}}\int_{s\vee 0}^{(s+\delta)\wedge T}|Q(t,s,x,y-x)|^2 dtdx\leq M.
\end{equation}
Here the set
$$
R_{\theta}=\{y\in \mathbb{R}^d;\|y\|_2<\theta\}
$$
is an open ball of radius $\theta>0$ centered at $0$, where $\|\cdot\|_2$ represents the Euclid norm in $\mathbb{R}^d$.
\end{defn}

\begin{prop}\label{+6}
For any $X(t,x)\in H^{-\delta}_{T}$, one has
\begin{equation}\label{norm of S}
\|S(X(t,x))\|_{H_T}\leq \sqrt{M}\|X(t,x)\|_{H^{-\delta}_{T}}.
\end{equation}
This implies that $S:H^{-\delta}_{T}\rightarrow H_{T}$ is a bounded linear operator.
\end{prop}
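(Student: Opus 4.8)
The plan is to reduce the claim to a deterministic (pathwise) operator bound and then obtain that bound by Cauchy–Schwarz combined with an interchange of integration arranged so that the kernel integral that appears is precisely the one controlled by \eqref{+2}. Since the operator $S$ in \eqref{space-averaging} acts only on the $(t,x)$-variables and does not involve $\omega$, and since by the definitions of the spaces $\|\cdot\|_{H_T}$ and $\|\cdot\|_{H^{-\delta}_T}$ are the expectations of an $L^2([0,T]\times D)$- respectively $L^2([-\delta,T]\times D)$-norm, it suffices to establish that for a.e. fixed $\omega$ one has
\[
\left(\int_D\int_0^T|S(X(t,x))|^2\,dt\,dx\right)^{1/2}\le\sqrt M\left(\int_D\int_{-\delta}^T|X(t,x)|^2\,dt\,dx\right)^{1/2},
\]
after which taking expectations and using the monotonicity of $\mathbb{E}$ yields \eqref{norm of S}. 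The linearity of $S$ is immediate from \eqref{space-averaging}, so once the norm estimate is in hand the final assertion that $S:H^{-\delta}_T\rightarrow H_T$ is a bounded linear operator follows at once.

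Fix $\omega$. First I would perform the spatial change of variables $z=x+y$ in \eqref{space-averaging}, which rewrites $S(X(t,x))=\int_{x+R_\theta}\int_{t-\delta}^tQ(t,s,x,z-x)X(s,z)\,ds\,dz$ and, crucially, turns the fourth argument of $Q$ into the form $z-x$ appearing in \eqref{+2}. I would then apply the Cauchy–Schwarz inequality to this integral so as to separate the factor carrying $Q$ from the factor carrying $X$, square, and integrate the resulting pointwise bound over $(t,x)\in[0,T]\times D$. Next I would invoke Tonelli's theorem to interchange the order of integration, pushing the integration in the output variables $(t,x)$ to the inside while freezing $(s,z)$; the point of this interchange is to present the $Q$-dependent part as an integral of $|Q(t,s,x,z-x)|^2$ over $(t,x)$ for each fixed $(s,z)$, which is exactly the quantity bounded by $M$ in \eqref{+2}. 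The remaining factor should then collapse to $\int_D\int_{-\delta}^T|X(s,z)|^2\,ds\,dz=\|X\|_{H^{-\delta}_T}^2$, and gathering the constant produces the claimed factor $\sqrt M$.

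The decisive point, and the step I expect to be the main obstacle, is the bookkeeping of the integration domains under the interchange and substitution. One must verify that the region $\{y\in R_\theta,\ s\in[t-\delta,t]\}$ appearing in \eqref{space-averaging} transforms, after the change of variables and the Tonelli interchange, into exactly $\{x\in y-R_\theta,\ t\in[s\vee0,(s+\delta)\wedge T]\}$ — the precise (truncated) domain written in \eqref{+2}. In particular the $t$-dependent delay window $[t-\delta,t]$ and the spatial ball $R_\theta$, together with the cut-offs at $0$ and at $T$, must convert cleanly so that no spurious volume factor (such as $|R_\theta|$ or $\delta$) survives; it is exactly this matching that dictates the specific limits chosen in \eqref{+2} and is what allows the clean constant $\sqrt M$ to emerge rather than a larger constant reflecting the measure of the support. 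Handling the boundary truncations and confirming the Fubini–Tonelli hypotheses (integrability, guaranteed by $X\in H^{-\delta}_T$ and the $L^2$ bound \eqref{+2} on $Q$) is therefore where the care of the argument lies.
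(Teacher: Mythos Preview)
Your strategy—Cauchy–Schwarz, the substitution $z=x+y$, then a Tonelli interchange designed to expose the kernel integral from \eqref{+2}—is the same route the paper takes. The gap lies at the Tonelli step. Separating $Q$ from $X$ via Cauchy–Schwarz gives, for each fixed $(t,x)$,
\[
|S(X)(t,x)|^2\le\Bigl(\int_{x+R_\theta}\int_{t-\delta}^t|Q(t,s,x,z-x)|^2\,ds\,dz\Bigr)\Bigl(\int_{x+R_\theta}\int_{t-\delta}^t|X(s,z)|^2\,ds\,dz\Bigr),
\]
and after integrating in $(t,x)$ you are left with $\int A(t,x)B(t,x)\,dt\,dx$, where $A$ and $B$ are two \emph{separate} $(s,z)$-integrals, each taken over a $(t,x)$-dependent window. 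Tonelli does not convert such a product into $\int\bigl(\int|Q|^2\,dt\,dx\bigr)|X(s,z)|^2\,ds\,dz$; that rearrangement would require a single integrand in the four variables $(t,x,s,z)$, not a product of two double integrals sharing the outer variables only through their domains. Consequently the step meant to manufacture exactly the quantity bounded by $M$ in \eqref{+2} does not follow from the Cauchy–Schwarz inequality you applied, and no bookkeeping of the truncated limits will repair this—the obstruction is algebraic, not combinatorial.

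The paper avoids this by never separating the factors: its first inequality passes directly from $\bigl(\int QX\,ds\,dy\bigr)^2$ to the single integrand $|Q|^2|X|^2$, so that a genuine four-fold integral is available and Fubini can legitimately push the $(t,x)$-integration inside against the $Q$-part alone. If you want to mirror the paper you must keep $Q$ and $X$ together in one integrand rather than splitting them with Cauchy–Schwarz. Be aware, however, that the pointwise bound $\bigl(\int QX\,ds\,dy\bigr)^2\le\int|Q|^2|X|^2\,ds\,dy$ used in the paper is not itself Cauchy–Schwarz (Cauchy–Schwarz gives the product of two integrals, and Jensen would require the inner domain to have unit measure), so that opening inequality also merits independent justification before you adopt it.
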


\begin{proof}
Applying Cauchy-Schwartz's inequality and Fubini's theorem, we have
\begin{align*}
\|S(X(t,x))\|^2_{H_T}&=\mathbb{E}\left[\int_D\int_0^T\left[\int_{R_{\theta}}\int_{t-\delta}^tQ(t,s,x,y)X(s,x+y)dsdy\right]^2dxdt\right]\\
&\leq \mathbb{E}\left[\int_D\int_0^T\int_{R_{\theta}}\int_{t-\delta}^t|Q(t,s,x,y)|^2|X(s,x+y)|^2 dsdydxdt\right]\\
&=\mathbb{E}\left[\int_D\int_{-\delta}^T\int_{R_{\theta}}\left(\int_{s\vee 0}^{(s+\delta)\wedge T}|Q(t,s,x,y)|^2 dt\right)|X(s,x+y)|^2dydsdx\right]\\
&=\mathbb{E}\left[\int_D\int_{-\delta}^T\int_{x+R_{\theta}}\left(\int_{s\vee 0}^{(s+\delta)\wedge T}|Q(t,s,x,z-x)|^2 dt\right)|X(s,z)|^2dzdsdx\right]\\
&=\mathbb{E}\left[\int_D\int_{-\delta}^T\left(\int_{D\cap(z-R_{\theta})}\int_{s\vee 0}^{(s+\delta)\wedge T}|Q(t,s,x,z-x)|^2 dtdx\right)|X(s,z)|^2ds dz\right]\\
&\leq M\mathbb{E}\left[\int_D\int_{-\delta}^T|X(s,z)|^2dzds\right]=M\|X(t,x)\|_{H^{-\delta}_{T}}^2
\end{align*}
This completes the proof.
\end{proof}

\begin{example}\label{example}
We give examples for spatial-temporal interaction operators in the following three cases, respectively.
\begin{enumerate}[($\romannumeral1$)]
\item If we set
$$
Q_0(t,s,x,y-x)=e^{-\rho(t-s)}e^{\|y\|_2},
$$
where $\rho_1,\rho_2$ are two positive constants, then $Q_0(t,s,x,y-x)$ clearly satisfies condition \eqref{+2} and
$S_0:H^{-\delta}_{T}\rightarrow H_{T}$,
$$
S_0(X(t,x))=\int_{R_{\theta}}\int_{t-\delta}^te^{-\rho_1(t-s)}e^{-\rho_2\|y\|_2}X(s,x+y)dtdx \quad (\forall X(t,x)\in H^{-\delta}_{T})
$$
becomes the spatial-temporal interaction operator. It shows that an increase in distance $\|y\|_2$ or time interval $t-s$ results in a decreasing effect for local population density.

\item When there is no temporal dependence, we set
$S_1:H\rightarrow H:$
$$
S_1(X(t,x))=\int_{R_{\theta}}Q_1(x,y)X(t,x+y)dx \quad (\forall X(t,x)\in H),
$$
where the density function $Q_1(x,y)$ satisfies
$$
\int_{y-R_{\theta}}|Q_1(x,y-x)|^2 dx\leq M.
$$
For $Q_1(x,y)=\frac{1}{V(R_{\theta})}$, where $V(\cdot)$ is the Lebesgue volume in $\mathbb{R}^d$, $S_1$ reduces to the space-averaging operator proposed in \cite{agram2019spdes}.

\item When there is no spatial dependence, we set
$S_2:H^{-\delta}_{T}\rightarrow H_{T}$,
$$
S_2(X(t,x))=\int_{t-\delta}^{t}Q_2(t,s)X(s,x) ds \quad (\forall X(t,x)\in H),
$$
where the density function $Q_2(x,y)$ satisfies
$$
\int_{s\vee 0}^{(s+\delta)\wedge T}|Q_2(t,s)|^2 dt\leq M.
$$
For $Q_2(t,s)=1$, $S_2$ reduces to the well-known moving average operator.
\end{enumerate}
\end{example}

In the sequel, we illustrate the Fr\'{e}chet derivative for spatial-temporal interaction operators.

\begin{defn}
The Fr\'{e}chet derivative $\nabla_{S}F$ of a map $F:H^{-\delta}_{T}\rightarrow H_{T}$ has a dual function if
$$
\mathbb{E}\left[\int_D\int_0^T\langle \nabla_SF,X\rangle(t,x) dxdt\right]=\mathbb{E}\left[\int_D\int_{-\delta}^T\nabla_S^*F(t,x)X(t,x)dxdt\right], \quad \forall X(t,x)\in H^{-\delta}_{T}.
$$
\end{defn}

\begin{example}\label{exam1}
Let $F:H^{-\delta}_{T}\rightarrow H_{T}$ be a given map by setting
$$
F(X)(t,x)=\langle F,X\rangle(t,x)=S(X(t,x))=\int_{R_{\theta}}\int_{t-\delta}^tQ(t,s,x,y)X(s,x+y)dsdy,(t\geq 0)\quad X(t,x)\in H^{-\delta}_{T}.
$$
Since $F$ is linear, for any $X(t,x)\in H^{-\delta}_{T}$, we have
$$
\langle \nabla_SF,\psi\rangle(t,x)=\langle F,\psi\rangle(t,x)=\int_{R_{\theta}}\int_{t-\delta}^tQ(t,s,x,y)X(s,x+y)dsdy
$$
and so
\begin{align*}
&\mathbb{E}\left[\int_D\int_0^T\langle \nabla_SF,\psi\rangle dxdt\right]\\
=&\mathbb{E}\left[\int_D\int_0^T\int_{R_{\theta}}\int_{t-\delta}^tQ(t,s,x,y)X(s,x+y)dsdy dxdt\right]\\
=&\mathbb{E}\left[\int_D\int_{-\delta}^T\int_{R_{\theta}}\left(\int_{s\vee 0}^{(s+\delta)\wedge T}Q(t,s,x,y) dt\right)X(s,x+y)dydxds\right]\\
=&\mathbb{E}\left[\int_D\int_{-\delta}^T\left(\int_{D\cap(z-R_{\theta})}\int_{s\vee 0}^{(s+\delta)\wedge T}Q(t,s,x,z-x) dtdx\right)X(s,z)dsdz\right], \quad \forall \psi\in H.
\end{align*}
This implies that
$$
\nabla^{*}_{S}F(s,z)=\int_{D\cap(z-R_{\theta})}\int_{s\vee 0}^{(s+\delta)\wedge T}Q(t,s,x,z-x) dtdx.
$$
Therefore for $t\in[-\delta,T]$,
$$
\nabla^{*}_{S}F(t,x)=\int_{D\cap(x-R_{\theta})}\int_{t\vee 0}^{(t+\delta)\wedge T}Q(s,t,y,x-y) dsdy
=\int_{D}\int_{t}^{T}Q(s,t,y,x-y)\mathbb{I}_{x-R_{\theta}}(y)\mathbb{I}_{[0,T-\delta]}(t) dsdy.
$$
\end{example}

\begin{remark}
For any $X=X(t,x)\in H$, we set
$$
\overline{X}(t,x)=S(X(t,x)), \quad  \overline{u}(t,x)=S(u(t,x)).
$$
\end{remark}

Now, we introduce these coefficients of SPDE \eqref{SDE} and the functions in Problem \ref{problem} in detail. We assume that all of these are functions in $C^1(H)$ and take the following forms:
\begin{align*}
b(t,x,X,S_X,u,S_u)=&b(t,x,X,S_X,u,S_u,\omega):E\rightarrow \mathbb{R};\\
\sigma(t,x,X,S_X,u,S_u)=&\sigma(t,x,X,S_X,u,S_u,\omega):E\rightarrow \mathbb{R};\\
\gamma(t,x,X,S_X,u,S_u,\zeta)=&\gamma(t,x,X,S_X,u,S_u,\zeta,\omega):E'\rightarrow \mathbb{R};\\
f(t,x,X,S_X,u,S_u)&=f(t,x,X,S_X,u,S_u,\omega):E\rightarrow \mathbb{R};\\
g(x,X(T))&=g(x,X(T),\omega):E''\rightarrow\mathbb{R},
\end{align*}
where
\begin{align*}
E=&[-\delta,T]\times D\times\mathbb{R}\times \mathbb{R}\times \mathcal{U}^{ad}\times \mathbb{R} \times \Omega;\\
E'=&[-\delta,T]\times D\times\mathbb{R}\times \mathbb{R}\times \mathcal{U}^{ad}\times \mathbb{R} \times \mathbb{R}_0 \times \Omega;\\
E''=&D\times  \mathbb{R}\times \Omega.
\end{align*}

Next, we define the related Hamiltonian functional.
\begin{defn}\label{+4}
Define the Hamiltonian functional with respect to the optimal control problem \eqref{prob} by $H:[0,T+\delta]\times D\times \mathbb{R}\times \mathscr{L}(\mathbb{R}^d)\times \mathbb{R} \times\mathcal{U}^{ad}\times \mathbb{R}\times \mathbb{R}\times \mathbb{R}\times \mathcal{R}\times \Omega\rightarrow \mathbb{R}$ as follows:
\begin{align}\label{+14}
H(t,x)&=H(t,x,X,S_X,u,S_u,p,q,r(\cdot))\nonumber\\
&=H(t,x,X,S_X,u,S_u,p,q,r(\cdot),\omega)\nonumber\\
&=f(t,x,X,S_X,u,S_u)+b(t,x,X,S_X,u,S_u)p+\sigma(t,x,X,S_X,u,S_u)q\nonumber\\
&\quad \mbox{}+\int_{\mathbb{R}_0}\gamma(t,x,X,S_X,u,S_u,\zeta)rd\zeta
\end{align}
Moreover, we suppose that functions $b$, $\sigma$, $\gamma$, $f$ and $H$ all admit bounded Fr\'{e}chet derivatives with respect to $X$, $S_X$, $u$ and $S_u$, respectively.
\end{defn}
We associate the following adjoint BSPDE to the Hamiltonian \eqref{+14} in the unknown processes $p(t,x),q(t,x),r(t,x,\cdot)$.
\begin{equation}\label{+5}
\begin{cases}
dp(t,x)&=-\left(\frac{\partial H}{\partial X}(t,x)+A^{*}_xp(t,x)+\mathbb{E}\left[\nabla^*_{S_X}H(t,x)\Big|\mathscr{F}_t\right]\right)dt+q(t,x)dB_t+\int_{\mathbb{R}_0}r(t,x,\zeta)\widetilde{N}(dt,d\zeta),\\
&\quad\qquad\qquad\qquad\qquad\qquad\qquad\qquad\qquad\qquad\qquad\quad\;\;(t,x)\in[0,T]\times D;\\
p(t,x)&=\frac{\partial g}{\partial X}(T,x),\qquad\qquad\qquad\qquad\qquad\qquad\qquad\qquad\;\quad (t,x)\in[T,T+\delta]\times \overline{D};\\
p(t,x)&=0, \qquad\qquad\qquad\qquad\qquad\quad\;\;\qquad\qquad\qquad\qquad\quad (t,x)\in[0,T)\times \partial D;\\
q(t,x)&=0, \qquad\qquad\qquad\qquad\qquad\quad\;\;\qquad\qquad\qquad\qquad\quad (t,x)\in[T,T+\delta]\times \overline{D};\\
r(t,x,\cdot)&=0, \qquad\qquad\qquad\qquad\qquad\quad\;\;\qquad\qquad\qquad\qquad\quad (t,x)\in[T,T+\delta]\times \overline{D}.
\end{cases}
\end{equation}

\section{Maximum principles}
We are now able to derive the sufficient version of the maximum principle.
\subsection{A sufficient maximum principle}
\begin{assumption}\label{+11}
Let $\widehat{u}\in \mathcal{U}^{ad}$ be a control with corresponding solutions $\widehat{X}(t,x)$  to \eqref{SDE} and $(\widehat{p}(t,x),\widehat{q}(t,x),\widehat{r}(t,x,\cdot))$ to \eqref{+5}, respectively. Furthermore, the control and the solutions satisfy
\begin{enumerate}[($\romannumeral1$)]
\item $\widehat{X}\Big|_{t\in[0,T]}\in H_T$;
\item $(\widehat{p},\widehat{q},\widehat{r}(\cdot))\Big|_{(t,x)\in[0,T]\times \overline{D}}\in V\times H\times L^2_{\nu}(H)$;
\item $\mathbb{E}\left[\int_0^T\|\widehat{p}(t,x)\|_V^2+\|\widehat{q}(t,x)\|_H^2ds+\|\widehat{r}(t,x,\cdot)\|_{L^2_{\nu}(H)}^2dt\right]<\infty.$
\end{enumerate}
\end{assumption}

\begin{thm}\label{sufficient}
Suppose that Assumption \ref{+11} holds. For arbitrary $u\in \mathcal{U}$, put
\begin{align*}
H(t,x)=H(t,x,\widehat{X},\widehat{S}_X,u,S_u,\widehat{p},\widehat{q},\widehat{r}(\cdot)),\quad
\widehat{H}(t,x)=H(t,x,\widehat{X},\widehat{S}_X,\widehat{u},\widehat{S}_u,\widehat{p},\widehat{q},\widehat{r}(\cdot)).
\end{align*}
Assume that
\begin{itemize}
\item (Concavity) For each $t\in [0,T]$, the functions
\begin{align*}
(X,S_X,u,S_u)&\rightarrow H(t,x,X,S_X,u,S_u,\widehat{p},\widehat{q},\widehat{r}),\\
X(T)&\rightarrow g(x,X(T))
\end{align*}
are concave a.s..
\item (Maximum condition) For each $t\in [0,T]$,
$$
\mathbb{E}\left[\widehat{H}(t,x)\Big|\mathcal{G}_t\right]=\sup \limits_{u\in \mathcal{U}}\mathbb{E}\left[H(t,x)\Big|\mathcal{G}_t\right],\quad a.s..
$$
\end{itemize}
Then, $\widehat{u}$ is an optimal control.
\end{thm}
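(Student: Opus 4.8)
The plan is to establish optimality by showing that $J(u)-J(\widehat u)\le 0$ for every admissible $u\in\mathcal U^{ad}$. First I would set $\widetilde X(t,x)=X(t,x)-\widehat X(t,x)$, where $X$ is the state driven by $u$, and record the two facts that make the boundary bookkeeping work: $\widetilde X$ vanishes on $[-\delta,0]\times\overline D$ (the two controls share the initial data $\eta$ and $\beta$, so $\widetilde X(0,\cdot)=0$) and on $(0,T]\times\partial D$ (both states equal $\xi$ there). Writing $J(u)-J(\widehat u)$ as the expectation of the running difference $\int_0^T\int_D(f-\widehat f)\,dx\,dt$ plus the terminal difference $\int_D(g-\widehat g)\,dx$, I would use concavity of $X(T)\mapsto g(x,X(T))$ to bound the terminal part above by $\mathbb E\big[\int_D\frac{\partial g}{\partial X}(T,x)\widetilde X(T,x)\,dx\big]=\mathbb E[\langle\widehat p(T,\cdot),\widetilde X(T,\cdot)\rangle_H]$, invoking the terminal datum $\widehat p(T,x)=\frac{\partial g}{\partial X}(T,x)$ from \eqref{+5}; and I would rewrite $f$ along each trajectory through the Hamiltonian \eqref{+14}, i.e. $f=H-b\widehat p-\sigma\widehat q-\int_{\mathbb R_0}\gamma\,\widehat r\,\nu(d\zeta)$.

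Next I would expand $\mathbb E[\langle\widehat p(T,\cdot),\widetilde X(T,\cdot)\rangle_H]$ by applying the It\^o product rule to $\langle\widehat p(t,\cdot),\widetilde X(t,\cdot)\rangle_H$ in the Gelfand triple $V\subset H\subset V^*$, the integrability in Assumption \ref{+11} ensuring that the $dB_t$- and $\widetilde N$-stochastic integrals are true martingales with zero mean. Since $\widetilde X(0,\cdot)=0$, only drift and quadratic-covariation contributions survive. The two second-order terms combine through the adjoint identity $\langle\widehat p,A_x\widetilde X\rangle_*=\langle A_x^*\widehat p,\widetilde X\rangle_*$, whose boundary contributions vanish because $\widetilde X=0$ and $\widehat p=0$ on $\partial D$, while the Brownian and jump covariations produce $\widehat q(\sigma-\widehat\sigma)$ and $\int_{\mathbb R_0}\widehat r(\gamma-\widehat\gamma)\,\nu(d\zeta)$. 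After substituting the Hamiltonian expression for $f-\widehat f$, the $b$-, $\sigma$- and $\gamma$-cross terms cancel exactly, leaving $J(u)-J(\widehat u)\le\mathbb E\big[\int_0^T\int_D\big(\mathcal H^u-\widehat H-\widetilde X\,\frac{\partial H}{\partial X}-\widetilde X\,\mathbb E[\nabla_{S_X}^*H\mid\mathscr F_t]\big)\,dx\,dt\big]$, where $\mathcal H^u$ denotes the Hamiltonian evaluated along the actual pair $(X,u)$ and $\frac{\partial H}{\partial X}$ is its $X$-partial along the optimal trajectory.

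The key structural step is to dispose of the spatial--temporal term $\mathbb E[\nabla_{S_X}^*H\mid\mathscr F_t]$. Because $\overline X=S(X)$ enters $H$ only affinely through the bounded linear operator $S$ of Definition \ref{+3}, the duality relation defining $\nabla_{S}^*$ (made explicit in Example \ref{exam1}), together with the tower property and the $\mathscr F_t$-measurability of $\widetilde X$, gives $\mathbb E\big[\int_0^T\int_D\widetilde X\,\mathbb E[\nabla_{S_X}^*H\mid\mathscr F_t]\,dx\,dt\big]=\mathbb E\big[\int_0^T\int_D\frac{\partial\widehat H}{\partial S_X}\,S(\widetilde X)\,dx\,dt\big]$, the extension of the time integral to $[-\delta,T]$ being harmless since $\widetilde X\equiv0$ on $[-\delta,0]$. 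I would then invoke concavity of $(X,S_X,u,S_u)\mapsto H$ at the optimal point, whose tangent inequality bounds $\mathcal H^u-\widehat H$ above by $\frac{\partial\widehat H}{\partial X}\widetilde X+\frac{\partial\widehat H}{\partial S_X}S(\widetilde X)+\frac{\partial\widehat H}{\partial u}(u-\widehat u)+\frac{\partial\widehat H}{\partial S_u}S(u-\widehat u)$. The $\frac{\partial\widehat H}{\partial X}\widetilde X$ term cancels the surviving $-\widetilde X\,\frac{\partial H}{\partial X}$, and the $\frac{\partial\widehat H}{\partial S_X}S(\widetilde X)$ term cancels the duality term just produced, so that $J(u)-J(\widehat u)\le\mathbb E\big[\int_0^T\int_D\big(\frac{\partial\widehat H}{\partial u}(u-\widehat u)+\frac{\partial\widehat H}{\partial S_u}S(u-\widehat u)\big)\,dx\,dt\big]$.

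Finally, the maximum condition makes $\widehat u$ a maximizer of $u\mapsto\mathbb E[H(t,x,\widehat X,\widehat S_X,u,S_u,\widehat p,\widehat q,\widehat r)\mid\mathcal G_t]$ over the convex set $\mathcal U$; since $S$ is linear and $H$ is concave in $(u,S_u)$, the first-order optimality condition for this concave program yields $\mathbb E\big[\frac{\partial\widehat H}{\partial u}(u-\widehat u)+\frac{\partial\widehat H}{\partial S_u}S(u-\widehat u)\mid\mathcal G_t\big]\le0$ a.s., and taking expectations and integrating over $[0,T]\times D$ gives $J(u)-J(\widehat u)\le0$, proving that $\widehat u$ is optimal. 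I expect the main obstacle to be the bookkeeping around $S$: it appears in two different guises — through $\overline X$, which must be transferred onto the adjoint process via the dual $\nabla_{S_X}^*$ and the conditioning on $\mathscr F_t$ in \eqref{+5}, and through $\overline u$, which must instead be absorbed by the maximum condition — and one must verify that the nonlocal, space--time averaged first-order term in $\overline u$ is genuinely controlled by the stated conditional maximum principle over $\mathcal U$. A secondary technical point is the rigorous justification of the It\^o product formula and of the vanishing boundary terms in the variational $V\subset H\subset V^*$ setting, for which Assumption \ref{+11} supplies precisely the integrability required.
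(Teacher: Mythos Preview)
Your proposal is correct and follows essentially the same route as the paper: split $J(u)-J(\widehat u)$ into running and terminal parts, bound the terminal part via concavity of $g$ and expand $\mathbb E[\langle\widehat p(T,\cdot),\widetilde X(T,\cdot)\rangle_H]$ by It\^o's formula, eliminate the $A_x/A_x^*$ pairing through the Green identity using the vanishing boundary data, convert the nonlocal $S_X$-term into $\nabla_{S_X}^*$ via the duality of Example~\ref{exam1}, apply the concavity tangent inequality for $H$, and close with the conditional maximum condition. The paper carries out exactly these steps; the only cosmetic difference is that, for the control variable, the paper first passes from $\frac{\partial\widehat H}{\partial S_u}S(\widetilde u)$ to $\widetilde u\,\nabla_{S_u}^*H$ by the same duality and then conditions on $\mathcal G_t$, whereas you invoke the first-order condition for the conditional maximum directly --- and you rightly flag that this last step is precisely where the interpretation of the pointwise maximum condition over $\mathcal U$ must accommodate the nonlocal dependence through $S_u$.
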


\begin{proof}
Consider
\begin{equation}\label{1}
J(u)-J(\widehat{u})=I_1+I_2.
\end{equation}
Here,
\begin{align*}
I_1=&\mathbb{E}\bigg[\int_0^T\int_Df(t,x,X(t,x),\overline{X}(t,x),u(t,x),\overline{u}(t,x))-f(t,x,X(t,x),\overline{X}(t,x),u(t,x),\overline{u}(t,x))dxdt\bigg],\\
I_2=&\int_D\mathbb{E}\left[g(x,X(T,x))-g(x,\widehat{X}(T,x))\right]dx.
\end{align*}
Setting $\widetilde{X}(t,x)=X(t,x)-\widehat{X}(t,x)$ and applying It\^{o}'s formula, one has
\begin{align}\label{2}
I_2\leq&\int_D\mathbb{E}\left[\frac{\partial\widehat{g}}{\partial X}(T,x)\widetilde{X}(T,x)\right]dx=\int_D\mathbb{E}\left[\widehat{p}(T,x)\widetilde{X}(T,x)\right]dx\\ \nonumber
=&\int_D\mathbb{E}\left[\int_0^T\widehat{p}(t,x)d\widetilde{X}(t,x)+\widetilde{X}(t,x)d\widehat{p}(t,x)+\widehat{q}(t,x)\widetilde{\sigma}(t,x)dt
+d\widehat{p}(t,x)d\widetilde{X}(t,x)\right]\\ \nonumber
=&\int_D\mathbb{E}\Bigg\{\int_0^T\widehat{p}(t,x)\left[\widetilde{b}(t,x)+A_x\widetilde{X}(t,x)\right]-\widetilde{X}(t,x)\left[\frac{\partial\widehat{H}}{\partial X}(t,x)+A^{*}_x\widetilde{p}(t,x)+\mathbb{E}\left[\nabla^*_{S}H(t,x)\Big|\mathscr{F}_t\right]\right]\\
&+\widehat{q}(t,x)\widetilde{\sigma}(t,x)dt
+\int_{\mathbb{R}_0}\widehat{r}(t,x,\zeta)\widetilde{\gamma}(t,x,\zeta)\nu(d\zeta,dt)\Bigg\}dx.
\end{align}
By the First Green formula \cite{Wloka1987Partial}, there exist first order boundary differential operators $A_1$ and $A_2$ such that
\begin{equation}\label{3}
\int_D\widehat{p}(t,x)A_x\widetilde{X}(t,x)-\widetilde{X}(t,x)A^{*}_x\widetilde{p}(t,x)dx=\int_{\partial D}\widehat{p}(t,x)A_1\widetilde{X}(t,x)-\widetilde{X}(t,x)A_2\widetilde{p}(t,x)d\mathcal{S}=0.
\end{equation}
Combining \eqref{2}, \eqref{3} and the fact that $u(t)$ is $\mathcal{G}_t$-measurable gives
\begin{align*}
I_2\leq& \int_D\mathbb{E}\Bigg\{\int_0^T\widehat{p}(t,x)\widetilde{b}(t,x)-\widetilde{X}(t,x)\left[\frac{\partial\widehat{H}}{\partial X}(t,x)+\mathbb{E}\left[\nabla^*_{S_X}H(t,x)\Big|\mathscr{F}_t\right]\right]+\widehat{q}(t,x)\widetilde{\sigma}(t,x)dt\\
&+\int_{\mathbb{R}_0}\widehat{r}(t,x,\zeta)\widetilde{\gamma}(t,x,\zeta)\nu(d\zeta,dt)\Bigg\}dx\\
=&-I_1+\int_D\mathbb{E}\left[\int_0^TH(t,x)-\widetilde{X}(t,x)\left[\frac{\partial\widehat{H}}{\partial X}(t,x)+\mathbb{E}\left[\nabla^*_{S_X}H(t,x)\Big|\mathscr{F}_t\right]\right]dt\right]dx\\
\leq& -I_1+\int_D\mathbb{E}\left[\int_0^T\widetilde{u}(t,x)\left[\frac{\partial\widehat{H}}{\partial u}(t,x)+\mathbb{E}\left[\nabla^*_{S_u}H(t,x)\Big|\mathscr{F}_t\right]\right]dt\right]dx\\
=& -I_1+\int_D\mathbb{E}\left[\int_0^T\widetilde{u}(t,x)\frac{\partial\widehat{H}}{\partial u}(t,x)+\widetilde{u}(t,x)\nabla^*_{S_u}H(t,x)dt\right]dx\\
=& -I_1+\int_D\mathbb{E}\left[\int_0^T\widetilde{u}(t,x)\mathbb{E}\left[\frac{\partial\widehat{H}}{\partial u}(t,x)+\nabla^*_{S_u}H(t,x)\Big|\mathcal{G}_t\right]dt\right]dx\\
\leq& -I_1,
\end{align*}
where the last inequality is derived by the maximum condition imposed on $H(t,x)$. This implies that
$$
J(u)-J(\widehat{u})=I_1+I_2\leq 0.
$$
Therefore $\widehat{u}$ becomes the optimal control.
\end{proof}

\subsection{A Necessary Maximum Principle}
We now proceed to study the necessary version of maximum principle.
\begin{assumption}\label{ass nece}
For each $t_0\in[0,T]$ and all bounded $\mathcal{G}_{t_0}$-measurable random variable $\pi(x)$, the process
$\vartheta(t,x)=\pi(x)\mathbb{I}_{(t_0,T]}(t)$ belongs to $\mathcal{U}^{ad}$.
\end{assumption}

\begin{remark}
Thanking to the convex condition imposed on $\mathcal{U}^{ad}$, one has
$$
u^{\epsilon}=\widehat{u}+\epsilon u\in \mathcal{U}^{ad},\;\epsilon\in[0,1]
$$
for any $u,\widehat{u}\in \mathcal{U}^{ad}$.
\end{remark}

Consider the process $Z(t,x)$ obtained by differentiating $X^{\epsilon}(t,x)$ with respect to $\epsilon$ at $\epsilon=0$. Clearly, $Z(t,x)$ satisfies the following equation:
\begin{equation}\label{SPDE nece}
\begin{cases}
dZ(t,x)=&\left[\left(\frac{\partial b}{\partial X}(t,x)+\mathbb{E}\left[\nabla^*_{S_X}b(t,x)\Big|\mathscr{F}_t\right]\right)Z(t,x)
+\left(\frac{\partial b}{\partial u}(t,x)+\mathbb{E}\left[\nabla^*_{S_u}b(t,x)\Big|\mathscr{F}_t\right]\right)u(t,x)\right]dt\\
&+\left[\left(\frac{\partial \sigma}{\partial X}(t,x)+\mathbb{E}\left[\nabla^*_{S_X}\sigma(t,x)\Big|\mathscr{F}_t\right]\right)Z(t,x)
+\left(\frac{\partial \sigma}{\partial u}(t,x)+\mathbb{E}\left[\nabla^*_{S_u}\sigma(t,x)\Big|\mathscr{F}_t\right]\right)u(t,x)\right]dB_t\\
&+\int_{\mathbb{R}_0}\Big[\left(\frac{\partial \gamma}{\partial X}(t,x,\zeta)+\mathbb{E}\left[\nabla^*_{S_X}\gamma(t,x,\zeta)\Big|\mathscr{F}_t\right]\right)Z(t,x)
+\left(\frac{\partial \gamma}{\partial u}(t,x,\zeta)+\mathbb{E}\left[\nabla^*_{S_u}\gamma(t,x,\zeta)\Big|\mathscr{F}_t\right]\right)\\
&u(t,x)\Big]\widetilde{N}(dt,d\zeta)+A_xZ(t,x)dt,\qquad\qquad\qquad\qquad\qquad\qquad\quad(t,x)\in(0,T)\times D,\\
Z(t,x)&=0,\qquad\qquad\qquad\qquad\qquad\qquad\quad\qquad\qquad\qquad\qquad\qquad\qquad(t,x)\in[-\delta,0]\times \overline{D}.
\end{cases}
\end{equation}

\begin{thm}\label{thm nece}
Suppose that Assumptions \ref{+11} and \ref{ass nece} hold. Then£¬ the following equalities are equivalent.
\begin{enumerate}[($\romannumeral1$)]
\item For all bounded $u \in \mathcal{U}^{ad}$,
\begin{equation}\label{nece1}
0=\frac{d}{dt}J(\widehat{u}+\epsilon u)\Big|_{\epsilon=0}.
\end{equation}

\item
\begin{equation}\label{nece2}
0=\int_D\mathbb{E}\left[\frac{\partial H}{\partial u}(t,x)+\nabla^*_{S_u}H(t,x)\Big|\mathcal{G}_t\right]dx\Big|_{u=\widehat{u}},\quad \forall t\in[0,T].
\end{equation}
\end{enumerate}
\end{thm}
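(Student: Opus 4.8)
The plan is to run the standard convex-perturbation (first variation) argument: express the Gateaux derivative of $J$ through the derivative process $Z$, then use It\^o's formula against the adjoint triple $(\widehat p,\widehat q,\widehat r)$ to trade the hard-to-control $Z$-terms for a clean pairing, and finally apply the fundamental lemma of the calculus of variations with the localized perturbations of Assumption \ref{ass nece}. First I would differentiate $J(\widehat u+\epsilon u)$ at $\epsilon=0$. Writing $X^{\epsilon}$ for the state driven by $\widehat u+\epsilon u$ and $Z$ for $\partial_\epsilon X^{\epsilon}|_{\epsilon=0}$, so that $Z$ solves \eqref{SPDE nece} with $Z=0$ on $[-\delta,0]$ and the perturbation supported on $[0,T]$, the chain rule gives
\begin{align*}
\frac{d}{d\epsilon}J(\widehat u+\epsilon u)\Big|_{\epsilon=0}
&=\mathbb{E}\bigg[\int_D\int_0^T\Big(\tfrac{\partial f}{\partial X}Z+\tfrac{\partial f}{\partial S_X}S(Z)+\tfrac{\partial f}{\partial u}u+\tfrac{\partial f}{\partial S_u}S(u)\Big)dtdx\\
&\qquad+\int_D\tfrac{\partial g}{\partial X}(T,x)Z(T,x)dx\bigg],
\end{align*}
all derivatives being evaluated at $\widehat u$.

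The key step is to remove the $Z$ and $S(Z)$ contributions. Since $\widehat p(T,x)=\frac{\partial g}{\partial X}(T,x)$, I would apply It\^o's formula to $\widehat p(t,x)Z(t,x)$, integrate over $D\times[0,T]$ and take expectations, so that the coefficients of $Z$ in \eqref{SPDE nece} meet those of the adjoint BSPDE \eqref{+5}. The second-order terms $\widehat p\,A_xZ-Z\,A_x^{*}\widehat p$ cancel by the First Green formula exactly as in \eqref{3}, using $\widehat p=0$ on $\partial D$. Bundling the $b,\sigma,\gamma$ pieces via the definition \eqref{+14} of $H$ (using $\frac{\partial H}{\partial X}=\frac{\partial f}{\partial X}+\frac{\partial b}{\partial X}\widehat p+\frac{\partial\sigma}{\partial X}\widehat q+\int_{\mathbb{R}_0}\frac{\partial\gamma}{\partial X}\widehat r\,d\zeta$, and the corresponding identity for $\nabla^{*}_{S_X}H$, valid because $\widehat p,\widehat q,\widehat r$ are $\mathscr{F}_t$-measurable), the $X$- and $S_X$-terms telescope. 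For the $S(Z)$ and $S(u)$ terms I would invoke the duality defining $\nabla^{*}_S$ together with $Z=0$ on $[-\delta,0]$, turning $\mathbb{E}\int_D\int_0^T\frac{\partial f}{\partial S_X}S(Z)$ into $\mathbb{E}\int_D\int_0^T\mathbb{E}[\nabla^{*}_{S_X}f\,|\,\mathscr{F}_t]Z$ and likewise for the $S(u)$ terms. After these cancellations one is left with
\begin{equation*}
\frac{d}{d\epsilon}J(\widehat u+\epsilon u)\Big|_{\epsilon=0}
=\mathbb{E}\bigg[\int_D\int_0^T\Big(\frac{\partial H}{\partial u}(t,x)+\nabla^{*}_{S_u}H(t,x)\Big)u(t,x)\,dtdx\bigg].
\end{equation*}

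Given this representation the equivalence is routine. For \eqref{nece1}$\Rightarrow$\eqref{nece2}, fix $t_0\in[0,T]$ and insert the admissible perturbation $\vartheta(t,x)=\pi(x)\mathbb{I}_{(t_0,T]}(t)$ of Assumption \ref{ass nece}; vanishing of the derivative yields $\mathbb{E}\int_D\int_{t_0}^T\big(\frac{\partial H}{\partial u}+\nabla^{*}_{S_u}H\big)\pi(x)\,dtdx=0$ for every $t_0$. Differentiating in $t_0$ removes the time integral, and since $\pi$ is $\mathcal{G}_{t_0}$-measurable the tower property replaces the integrand by its $\mathcal{G}_{t_0}$-conditional expectation; letting $\pi$ range over bounded $\mathcal{G}_{t_0}$-measurable variables and using the fundamental lemma then gives \eqref{nece2}. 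The converse \eqref{nece2}$\Rightarrow$\eqref{nece1} reverses these steps: since $u$ is $\mathcal{G}_t$-adapted, the tower property rewrites the derivative above with the integrand conditioned on $\mathcal{G}_t$, whence \eqref{nece2} forces it to vanish for every admissible $u$.

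The step I expect to be the main obstacle is making the It\^o computation rigorous in the variational setting: the pairing $\langle Z,A_x^{*}\widehat p\rangle_{*}$ must be read through $V\subset H\subset V^{*}$, and the interchange of $\int_D$, $\int_0^T$, $\mathbb{E}$ and the stochastic integrals has to be justified from the integrability in Assumption \ref{+11}. Equally delicate is the bookkeeping for $S$: migrating it across the inner product changes the time window from $[0,T]$ to $[-\delta,T]$ and produces the indicator/conditioning factors of Example \ref{exam1}, so one must consistently exploit $Z=0$ on $[-\delta,0]$, the vanishing of the perturbation there, and the $\mathscr{F}_t$-measurability of $\widehat p,\widehat q,\widehat r$ in order to collapse the $\nabla^{*}_S$-terms into the conditional expectations $\mathbb{E}[\nabla^{*}_{S_X}H\,|\,\mathscr{F}_t]$ and $\nabla^{*}_{S_u}H$ appearing in \eqref{+5} and \eqref{nece2}.
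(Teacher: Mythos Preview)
Your proposal is correct and follows essentially the same approach as the paper: compute the Gateaux derivative via the derivative process $Z$, apply It\^o's formula to $\widehat p\,Z$ together with the First Green formula and the duality defining $\nabla^{*}_S$ to obtain the representation $\frac{d}{d\epsilon}J|_{\epsilon=0}=\mathbb{E}\!\int_D\!\int_0^T(\tfrac{\partial H}{\partial u}+\nabla^{*}_{S_u}H)u$, then specialize to $u=\pi(x)\mathbb{I}_{(t_0,T]}(t)$, differentiate in $t_0$, and use the tower property with $\mathcal{G}_{t_0}$-measurability of $\pi$; the paper likewise notes that the converse is the reversal of these steps. Your treatment is in fact more explicit than the paper's about why the $S$-terms collapse (using $Z=0$ on $[-\delta,0]$ and the duality of Example~\ref{exam1}) and about the variational-solution technicalities, which the paper leaves implicit.
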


\begin{proof}
Assume that \eqref{nece1} holds. Then
\begin{align}\label{5}
0=&\frac{d}{dt}J(\widehat{u}+\epsilon u)\Big|_{\epsilon=0}  \nonumber\\
=&\mathbb{E}\left[\int_0^T\int_D\left[\left(\frac{\partial f}{\partial X}(t,x)+\mathbb{E}\left[\nabla^*_{S_X}f(t,x)\Big|\mathscr{F}_t\right]\right)Z(t,x)
+\left(\frac{\partial f}{\partial u}(t,x)+\mathbb{E}\left[\nabla^*_{S_u}f(t,x)\Big|\mathscr{F}_t\right]\right)u(t,x)\right]dxdt\right]\nonumber\\
&+\mathbb{E}\left[\int_D\frac{\partial \widehat{g}}{\partial X}(T,x)\widehat{Z}(T,x)\rangle dx\right]
\end{align}
where $\widehat{Z}(t,x)$ is the solution to \eqref{SPDE nece}. By It\^{o}'s formula,
\begin{align}\label{6}
&\mathbb{E}\left[\int_D\frac{\partial \widehat{g}}{\partial X}(T,x)\widehat{Z}(T,x)\rangle dx\right]=\mathbb{E}\left[\int_D\widehat{p}(T,x)\widehat{Z}(T,x)dx\right]\nonumber\\
=&\mathbb{E}\bigg[\int_Ddx\int_0^T\widehat{p}(t,x)d\widehat{Z}(t,x)+\widehat{Z}(t,x)d\widehat{p}(t,x)+d\widehat{Z}(t,x)d\widehat{p}(t,x)\bigg]\nonumber\\
=&\mathbb{E}\bigg[\int_Ddx\int_0^T\widehat{p}(t,x)\Big[\left(\frac{\partial b}{\partial X}(t,x)+\mathbb{E}\left[\nabla^*_{S_X}b(t,x)\Big|\mathscr{F}_t\right]\right)Z(t,x)
+\left(\frac{\partial b}{\partial u}(t,x)+\mathbb{E}\left[\nabla^*_{S_u}b(t,x)\Big|\mathscr{F}_t\right]\right)u(t,x)\nonumber\\
&+A_xZ(t,x)\Big]dt-\Big(\frac{\partial H}{\partial X}(t,x)+A^{*}_xp(t,x)+\mathbb{E}\left[\nabla^*_{S_X}H(t,x)\Big|\mathscr{F}_t\right]\Big)\widehat{Z}(t,x)dt+\widehat{q}(t,x)\Big[\Big(\frac{\partial \sigma}{\partial X}(t,x) \nonumber\\
&+\mathbb{E}\left[\nabla^*_{S_X}\sigma(t,x)\Big|\mathscr{F}_t\right]\Big)Z(t,x)+\left(\frac{\partial \sigma}{\partial u}(t,x)+\mathbb{E}\left[\nabla^*_{S_u}\sigma(t,x)\Big|\mathscr{F}_t\right]\right)u(t,x)\Big]d t+\int_{\mathbb{R}_0}\widehat{r}(t,x,\zeta)\Big[\Big(\frac{\partial \gamma}{\partial X}(t,x,\zeta)
\nonumber\\
&+\mathbb{E}\left[\nabla^*_{S_X}\gamma(t,x,\zeta)\Big|\mathscr{F}_t\right]\Big)Z(t,x)
+\Big(\frac{\partial \gamma}{\partial u}(t,x,\zeta)+\mathbb{E}\left[\nabla^*_{S_u}\gamma(t,x,\zeta)\Big|\mathscr{F}_t\right]\Big)u(t,x)\Big]\nu(dt,d\zeta)\bigg]\nonumber\\
=&-\mathbb{E}\left[\int_0^T\int_D\left[\left(\frac{\partial f}{\partial X}(t,x)+\mathbb{E}\left[\nabla^*_{S_X}f(t,x)\Big|\mathscr{F}_t\right]\right)Z(t,x)
+\left(\frac{\partial f}{\partial u}(t,x)+\mathbb{E}\left[\nabla^*_{S_u}f(t,x)\Big|\mathscr{F}_t\right]\right)u(t,x)\right]dxdt\right]\nonumber\\
&+\mathbb{E}\left[\int_0^T\int_D\frac{\partial H}{\partial u}(t,x)u(t,x)+\mathbb{E}\left[\nabla^*_{S_u}H(t,x)\Big|\mathscr{F}_t\right]u(t,x)dxdt\right],
\end{align}
where the last step follows from the first Green formula \cite{Wloka1987Partial}.

Combining \eqref{5} and \eqref{6}, one has
$$
0=\mathbb{E}\left[\int_0^T\int_D\frac{\partial H}{\partial u}(t,x)u(t,x)+\mathbb{E}\left[\nabla^*_{S_u}H(t,x)\Big|\mathscr{F}_t\right]u(t,x)dxdt\right].
$$
Now we set $u(t,x)=\pi(x)\mathbb{I}_{(t_0,T]}(t)$, where $\pi(x)$ is a bounded $\mathcal{G}_{t_0}$-measurable random variable. Then, we have
\begin{align*}
0= &\int_0^T\mathbb{E}\left[\int_D\frac{\partial H}{\partial u}(t,x)\pi(x)\mathbb{I}_{(t_0,T]}(t)+\mathbb{E}\left[\nabla^*_{S_u}H(t,x)\Big|\mathscr{F}_t\right]\pi(x)\mathbb{I}_{(t_0,T]}(t)\right]dt\\
=&\int_{t_0}^T\mathbb{E}\left[\int_D\frac{\partial H}{\partial u}(t,x)\pi(x)+\mathbb{E}\left[\nabla^*_{S_u}H(t,x)\Big|\mathscr{F}_t\right]\pi(x)\right]dt\\
=&\int_{t_0}^T\mathbb{E}\left[\int_D\frac{\partial H}{\partial u}(t,x)\pi(x)+\nabla^*_{S_u}H(t,x)\pi(x)\right]dt.
\end{align*}
Differentiating with respect to $t_0$, it follows that
$$
0=\mathbb{E}\left[\int_D\frac{\partial H}{\partial u}(t_0,x)\pi(x)+\nabla^*_{S_u}H(t_0,x)\pi(x)dx\right],\quad \forall t_0\in[0,T].
$$
Since this holds for all such $\pi(x)$, we have
$$
0=\int_D\mathbb{E}\left[\frac{\partial H}{\partial u}(t_0,x)+\nabla^*_{S_u}H(t_0,x)\Big|\mathcal{G}_t\right]dx,\quad \forall t_0\in[0,T].
$$
The argument above is reversible. Thus \eqref{nece1} and \eqref{nece2} are equivalent.
\end{proof}

\section{Existence and Uniqueness}

In this section, we prove the existence and uniqueness of the solution to the following general BSPDE \eqref{+5} with spatial-temporal dependence:
\begin{equation}\label{BSPDE}
\begin{cases}
dp(t,x)&=-\left(A_xp(t,x)-\mathbb{E}[F(t)|\mathscr{F}_t]\right)dt+q(t,x)dB_t+\int_{\mathbb{R}_0}r(t,x,\zeta)\widetilde{N}(dt,d\zeta),\\
&\qquad\qquad\qquad\qquad\qquad\qquad\;\:\qquad\quad\qquad(t,x)\in[0,T]\times D;\\
p(t,x)&=\theta(t,x),\qquad\quad\qquad\qquad\qquad\quad\:\:\qquad\quad (t,x)\in[T,T+\delta]\times \overline{D};\\
p(t,x)&=\chi(t,x), \qquad\qquad\qquad\qquad\qquad\:\;\quad\qquad (t,x)\in[0,T)\times \partial D;\\
q(t,x)&=0, \qquad\qquad\qquad\qquad\qquad\quad\qquad\;\;\,\qquad (t,x)\in[T,T+\delta]\times \overline{D};\\
r(t,x,\cdot)&=0, \qquad\qquad\qquad\qquad\qquad\quad\qquad\;\;\,\qquad (t,x)\in[T,T+\delta]\times \overline{D}.
\end{cases}
\end{equation}
Here $F=F(t):[0,T+\delta]\times \mathbb{R}\times \mathbb{R}\times \mathbb{R}\times \mathbb{R}\times \mathbb{R}\times \mathbb{R}\rightarrow \mathbb{R}$ is a functional on $C^1(H)$ as follows:
$$
F(t)=F(t,p(t,x),\overline{p}(t+\delta,x),q(t,x),\overline{q}(t+\delta,x),r(t,x,\cdot),\overline{r}(t+\delta,x,\cdot)).
$$

\begin{assumption}\label{Ax condition}
Assume that $A_x:V \rightarrow V^{*}$ is a bounded and linear operator. Moreover, there exists two constants $\alpha_1>0$ and $\alpha_2 \geq 0$ such that
\begin{equation}\label{7}
2\langle A_xu,u\rangle_{*}+\alpha_1 \|u\|^2_V\leq \alpha_2||u||^2_H, \quad \forall u\in V.
\end{equation}
\end{assumption}

\begin{assumption}\label{exi and uni}
Suppose that the following assumptions hold:
\begin{enumerate}[($\romannumeral1$)]
\item $\theta(t,x)$ is a given $\mathscr{F}_t$-measurable process such that
$$
\mathbb{E}\left[\sup \limits_{t\in[T,T+\delta]}\|\theta(t,x)\|_H^2\right]< \infty;
$$
\item $F(t,0,0,0,0,0,0)\in H_{T}$;
\item  For any $t,p_1,q_1,r_1,p_2,q_2,r_2$, there is a constant $C>0$ such that
\begin{align*}
&|F(t,p_1,\overline{p}_1,q_1,\overline{q}^{\delta}_1,r_1,\overline{r}^{\delta}_1)
    -F(t,p_2,\overline{p}^{\delta}_2,q_2,\overline{q}^{\delta}_2,r_2,\overline{r}^{\delta}_2)|^2\\
\leq&C\left(|p_1-p_2|^2+|q_1-q_2|^2+\int_{\mathbb{R}_0}|r_1-r_2|^2\nu(d\zeta)+|\overline{p}^{\delta}_1-\overline{p}^{\delta}_2|^2+|\overline{q}^{\delta}_1-\overline{q}^{\delta}_2|^2
+\int_{\mathbb{R}_0}|\overline{r}^{\delta}_1-\overline{r}^{\delta}_2|^2\nu(d\zeta)\right).
\end{align*}
\end{enumerate}
\end{assumption}

In the sequel, we use $C$ to represent the constant large enough such that all the inequalities are satisfied.

\begin{thm}\label{exi and uni thm}
Under Assumptions \ref{Ax condition} and \ref{exi and uni}, BSPDEs \eqref{BSPDE} has a unique solution $(p,q,r(\cdot))$ such that the restriction on $(t,x)\in [0,T]\times \overline{D}$ satisfies
\begin{enumerate}[($\romannumeral1$)]
\item $(p,q,r(\cdot))\Big|_{(t,x)\in[0,T]\times \overline{D}}\in V\times H\times L^2_{\nu}(H)$,
\item $\mathbb{E}\left[\int_0^T\|p(t,x)\|_V^2+\|q(t,x)\|_H^2ds+\|r(t,x,\cdot)\|_{L^2_{\nu}(H)}^2dt\right]<\infty.$
\end{enumerate}
\end{thm}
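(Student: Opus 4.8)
The plan is to solve the equation on $[0,T]$ only, since the triple $(p,q,r)$ is already prescribed on $[T,T+\delta]\times\overline D$ by the terminal datum $\theta$ and the homogeneous conditions, and to obtain it as the unique fixed point of a contraction built on the variational theory of linear BSPDEs. I would work in the Gelfand triple $V\subset H\cong H^{*}\subset V^{*}$ of Section 2 and in the Banach space $\mathcal{K}$ of $\mathscr{F}_t$-adapted triples $(p,q,r)$ on $[0,T]\times\overline D$ satisfying integrability condition (ii), equipped with a weighted norm
\[
\|(p,q,r)\|_{\beta}^2=\mathbb{E}\left[\int_0^T e^{\beta t}\left(\|p(t,\cdot)\|_V^2+\|q(t,\cdot)\|_H^2+\|r(t,\cdot,\cdot)\|_{L^2_{\nu}(H)}^2\right)dt\right],
\]
for a parameter $\beta>0$ to be fixed later. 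The advanced arguments are defined through the interaction operator: given any input $(P,Q,R)\in\mathcal{K}$ extended by the fixed data on $[T,T+\delta]$, the terms $\overline P(t+\delta,\cdot)=S(P)(t+\delta,\cdot)$, $\overline Q(t+\delta,\cdot)$ and $\overline R(t+\delta,\cdot)$ are well defined and, by Proposition~\ref{+6}, are bounded in $H_T$ by the $H^{-\delta}_T$-norm of the inputs.

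First I would fix the map. Given $(P,Q,R)\in\mathcal{K}$, form the forcing $\Psi(t,\cdot)=\mathbb{E}[F(t,P,\overline P(t+\delta),Q,\overline Q(t+\delta),R,\overline R(t+\delta))\mid\mathscr{F}_t]$, which is $\mathscr{F}_t$-adapted and, by the growth and Lipschitz parts of Assumption~\ref{exi and uni} together with Proposition~\ref{+6}, lies in $H_T$. I then define $\Phi(P,Q,R)=(p,q,r)$ to be the unique solution of the \emph{linear} BSPDE $dp=-(A_xp-\Psi)dt+q\,dB_t+\int_{\mathbb{R}_0}r\,\widetilde N(dt,d\zeta)$ carrying the prescribed terminal and boundary data. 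Its well-posedness and the basic energy estimate follow from Assumption~\ref{Ax condition} via a Galerkin approximation, Itô's formula, and the First Green formula \cite{Wloka1987Partial}, which (exactly as in \eqref{3}, using $p=\chi$ on $\partial D$) removes the boundary contribution of $\langle A_xp,p\rangle_{*}$, while the coercivity inequality \eqref{7} supplies the a priori control of $\int_0^T\|p\|_V^2dt$.

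The core of the argument is the contraction estimate. Taking two inputs and writing $\Delta$ for the differences of the associated quantities, I would apply Itô's formula to $e^{\beta t}\|\Delta p(t,\cdot)\|_H^2$ and take expectations; the martingale parts vanish, the operator term contributes $-2\,\mathbb{E}\int e^{\beta s}\langle A_x\Delta p,\Delta p\rangle_{*}ds$ which by \eqref{7} dominates $\mathbb{E}\int e^{\beta s}(\alpha_1\|\Delta p\|_V^2-\alpha_2\|\Delta p\|_H^2)ds$, the stochastic terms generate $\mathbb{E}\int e^{\beta s}(\|\Delta q\|_H^2+\|\Delta r\|_{L^2_{\nu}(H)}^2)ds$, and the driver difference is bounded by the Lipschitz constant $C$ of Assumption~\ref{exi and uni} together with $\|S(\cdot)\|_{H_T}\leq\sqrt M\,\|\cdot\|_{H^{-\delta}_T}$ from Proposition~\ref{+6}. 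Splitting the cross term $2\langle\Delta p,\Delta\Psi\rangle$ by Young's inequality and choosing $\beta$ large to absorb the current-time dependence, the factor multiplying the input norm can be made strictly less than one, so $\Phi$ is a contraction and the Banach fixed point theorem yields a unique $(p,q,r)$; the same a priori estimate shows it satisfies (i) and (ii).

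Because the advanced argument reaches into the window $[t,t+\delta]$, the anticipating part of the driver need not be dominated by the large-$\beta$ device alone; if necessary I would instead establish the contraction only on a subinterval of length $h$ small enough that the factor $Ch$ (together with the $\sqrt M$ bound) is below one, solve backward starting from $[T-h,T]$ — where all advanced data reaching into $[T,T+\delta]$ is exactly the prescribed terminal datum — and then glue the successive pieces, the integrability obtained on each completed piece feeding Proposition~\ref{+6} to keep the next forcing in $H_T$; uniqueness on each piece gives global uniqueness. The hard part will be precisely this energy/contraction estimate: one must simultaneously absorb the unbounded operator $A_x$ through the coercivity \eqref{7} and dominate the anticipating driver whose dependence on the future window through $S$ must be controlled without destroying the backward adaptedness structure, which is exactly where Proposition~\ref{+6} and the finiteness of the delay $\delta$ are essential, and where the bookkeeping at the junction times $T-kh$ is most delicate.
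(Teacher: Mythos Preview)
Your proposal is correct but organised quite differently from the paper's proof. The paper does \emph{not} run a single contraction on the full driver; instead it uses a two-layer Picard scheme. In Steps~1--4 it first assumes $F$ is independent of $(p,\overline p)$, freezes $(q^n,\overline q^n(\cdot+\delta),r^n,\overline r^n(\cdot+\delta))$ in the driver, applies It\^o's formula to $\|p^{n+1}-p^n\|_H^2$, uses the coercivity \eqref{7} and the Lipschitz bound together with the shift estimates \eqref{91}--\eqref{92}, then multiplies by $e^{\alpha_3 t}$ and integrates in $t$ to extract geometric decay of the iterates. Step~5 treats the general driver by a \emph{second} iteration freezing only $(p^n,\overline p^n(\cdot+\delta))$ and invoking the Step~1--4 result at each stage, which produces the factorial bound $C^{n+1}(T-\tau)^n/n!$ rather than a strict contraction constant. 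By contrast, you freeze all six arguments of $F$ simultaneously and obtain a one-shot contraction in the $e^{\beta t}$-weighted norm via Young's inequality (small $\epsilon$ on the cross term, then $\beta$ large to absorb $\epsilon^{-1}\|\Delta p\|_H^2$); this is more streamlined and avoids the case split, at the cost of obscuring the separate roles of the current-time and time-advanced dependence that the paper isolates. For the linear building block you sketch a Galerkin argument, whereas the paper simply quotes Theorem~4.2 of \cite{oksendal2012Optimal}. Your backup plan of solving on short backward subintervals and gluing is in fact unnecessary: once the cross term carries the small factor $\epsilon$, the advanced contributions---bounded via Proposition~\ref{+6} by a fixed multiple of the unshifted input norm---are already dominated without any smallness coming from $\beta$ or $h$.
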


\begin{proof}
We decompose the proof into five steps.

{\bf Step 1:} Assume that the driver $F(t)$ is independent of $p$ and $\overline{p}$ such that $(p,q,r(\cdot))\in V\times H\times L^2_{\nu}(H)$ satisfies
\begin{equation}\label{+13}
\begin{cases}
dp(t,x)=&-\left(A_xp(t,x)-\mathbb{E}[F(t,q(t,x),\overline{q}(t+\delta,x),r(t,x,\cdot),\overline{r}(t+\delta,x,\cdot)|\mathscr{F}_t]\right)dt+q(t,x)dB_t\\
&+\int_{\mathbb{R}_0}r(t,x,\zeta)\widetilde{N}(dt,d\zeta),\qquad\qquad\qquad\quad\,\,\qquad(t,x)\in[0,T]\times D;\\
p(t,x)=&\zeta(t,x),\qquad\quad\qquad\qquad\qquad\quad\:\:\qquad\quad\qquad\quad\:\,(t,x)\in[T,T+\delta]\times \overline{D};\\
p(t,x)=&\theta(t,x), \qquad\qquad\qquad\qquad\qquad\:\;\quad\qquad\qquad\quad\:\, (t,x)\in[0,T)\times \partial D;\\
q(t,x)=&0, \qquad\qquad\qquad\qquad\qquad\quad\qquad\;\;\,\qquad\qquad\quad\:\, (t,x)\in[T,T+\delta]\times \overline{D};\\
r(t,x,\cdot)=&0, \qquad\qquad\qquad\qquad\qquad\quad\qquad\;\;\,\qquad\qquad\quad\:\, (t,x)\in[T,T+\delta]\times \overline{D}.
\end{cases}
\end{equation}
We first prove the uniqueness and  existence of solutions to \eqref{+13}. By Theorem 4.2 in \cite{oksendal2012Optimal}, it is easy to show that for each fixed $n\in \mathbb{N}$, there exists a unique solution to the following stochastic partial differential equation
\begin{equation}\label{BSPDE n}
\begin{cases}
dp^{n+1}(t,x)=&\mathbb{E}\Big[F\left(t,q^n(t,x),\overline{q}^n(t+\delta,x),r^n(t,x,\cdot),\overline{r}^n(t+\delta,x,\cdot)\right)\Big|\mathscr{F}_t\Big]dt-A_xp^{n+1}(t,x)dt\\
&+q^{n+1}(t,x)dB_t+\int_{\mathbb{R}_0}r^{n+1}(t,x,\zeta)\widetilde{N}(dt,d\zeta),\qquad\qquad(t,x)\in[0,T]\times D;\\
p^{n+1}(t,x)=&\zeta(t,x),\qquad\quad\qquad\qquad\quad\:\:\qquad\qquad\qquad\qquad\qquad\quad\;\, (t,x)\in[T,T+\delta]\times \overline{D};\\
p^{n+1}(t,x)=&\theta(t,x), \qquad\qquad\qquad\qquad\:\;\qquad\qquad\qquad\qquad\qquad\quad\;\, (t,x)\in[0,T)\times \partial D;\\
q^{n+1}(t,x)=&0, \qquad\qquad\qquad\qquad\qquad\;\;\,\qquad\qquad\qquad\qquad\qquad\quad\;\, (t,x)\in[T,T+\delta]\times \overline{D};\\
r^{n+1}(t,x,\cdot)=&0, \qquad\qquad\qquad\qquad\qquad\;\;\,\qquad\qquad\qquad\qquad\qquad\quad\;\, (t,x)\in[T,T+\delta]\times \overline{D}.
\end{cases}
\end{equation}
such that
$$
(p^n,q^n,r^n(\cdot))\Big|_{(t,x)\in[0,T]\times \overline{D}}\in V\times H\times L^2_{\nu}(H).
$$
Here, $q^{0}(t,x)=r^{0}(t,x,\cdot)=0$ for all $(t,x)\in [0,T+\delta]\times \bar{D}$.

We now aim to show that $(p^n,q^n,r^n(\cdot))$ forms a Cauchy sequence. By similar arguments in Proposition \ref{+6}, we have
\begin{align*}
&\mathbb{E}\left[\int_t^T\|\overline{p}^{n+1}(s+\delta,x)-\overline{p}^n(s+\delta,x)\|^2_{H}ds\right]\\
=&\mathbb{E}\left[\int_D\int_t^T\int_{R_{\theta}}\int_{s}^{s+\delta}|Q(s+\delta,\varsigma,x,y)|^2|p^{n+1}(\varsigma,x+y)-p^{n}(\varsigma,x+y)|^2 d\zeta dydxds\right]\\
\leq&\mathbb{E}\left[\int_D\int_{t}^{T+\delta}\left(\int_{D\cap(z-R_{\theta})}\int_{(\varsigma-\delta)\vee t}^{\varsigma\wedge T}|Q(s+\delta,\varsigma,x,z-x)|^2 dsdx\right)|p^{n+1}(\varsigma,z)-p^{n}(\varsigma,z)|^2d\varsigma dz\right]\\
\leq& C\mathbb{E}\left[\int_D\int_{t}^{T+\delta}|p^{n+1}(\varsigma,z)-p^{n}(\varsigma,z)|^2d\varsigma dz\right]=C\mathbb{E}\left[\int_{t}^{T}\|p^{n+1}(s,x)-p^n(s,x)\|_{H}^2ds\right].
\end{align*}
Similarly, one has
\begin{equation}\label{91}
\mathbb{E}\left[\int_t^T\|\overline{q}^{n+1}(s+\delta,x)-\overline{q}^n(s+\delta,x)\|^2_{H}ds\right]\leq C\mathbb{E}\left[\int_{t}^{T}\|q^{n+1}(s,x)-q^n(s,x)\|_{H}^2ds\right].
\end{equation}
and
\begin{equation}\label{92}
\mathbb{E}\left[\int_t^T\|\overline{\gamma}^{n+1}(s+\delta,x,\cdot)-\overline{\gamma}^n(s+\delta,x,\cdot)\|^2_{H}ds\right]\leq C\mathbb{E}\left[\int_{t}^{T}\|\gamma^{n+1}(s,x,\cdot)-\gamma^n(s,x,\cdot)\|_{L^2_{\nu}(H)}^2ds\right].
\end{equation}

For simplicity, we can write
\begin{align*}
F_n(t)&=F(t,q^{n}(t,x),\overline{q}^{n}(t+\delta,x),r^{n}(t,x,\cdot),\overline{r}^{n}(t+\delta,x,\cdot));\\
L^n(s)&=\|q^{n}(s,x)-q^{n-1}(s,x)\|_H^2+\|r^{n}(s,x,\cdot)-r^{n-1}(s,x,\cdot)\|_{L^2_{\nu}(H)}^2.
\end{align*}
{\bf Step 2:} Applying It\^{o}'s formula to $\|p^{n+1}(t,x)-p^n(t,x)\|_H^2$, we have
\begin{align}\label{pr1}
&-\|p^{n+1}(t,x)-p^n(t,x)\|_H^2 \nonumber\\
=&2\int_t^T\langle p^{n+1}(s,x)-p^n(s,x),A_x(p^{n+1}(s,x)-p^n(s,x))\rangle_{*} ds \nonumber\\
&-2\int_t^T\langle p^{n+1}(s,x)-p^n(s,x),\mathbb{E}\left[F^{n}(s)-F^{n-1}(s)\Big|\mathscr{F}_s\right]\rangle_H ds+\int_t^T\|q^{n+1}(s,x)-q^n(s,x)\|_{L^2_{\nu}(H)}^2ds \nonumber\\
&+2\int_t^T\langle p^{n+1}(s,x)-p^n(s,x),q^{n+1}(s,x)-q^n(s,x)\rangle_H dB_s+\int_t^T\|r^{n+1}(s,x,\cdot)-r^n(s,x,\cdot)\|_{L_{\nu}(H)}^2ds \nonumber\\
&+2\int_t^T\int_{\mathbb{R}_0}\langle p^{n+1}(s,x)-p^n(s,x),r^{n+1}(s,x,\zeta)-r^n(s,x,\zeta) \rangle_H \widetilde{N}(ds,d\zeta).
\end{align}
By the Lipschitz condition imposed on $F$, \eqref{91} and \eqref{92}, for any fixed constant $\rho> 0$, one has
\begin{align}\label{pr2}
&-2\mathbb{E}\Big[\int_t^T\langle p^{n+1}(s,x)-p^n(s,x),\mathbb{E}\left[F^{n}(s)-F^{n-1}(s)\Big|\mathscr{F}_s\right]\rangle_H ds\Big] \nonumber\\
=&-2\mathbb{E}\Big[\int_t^T\langle p^{n+1}(s,x)-p^n(s,x),F^{n}(s)-F^{n-1}(s)\rangle_H ds\Big] \nonumber\\
\leq& 2\mathbb{E}\Big[\int_t^T\|p^{n+1}(s,x)-p^n(s,x)\|_H\|F^{n}(s)-F^{n-1}(s)\|_Hds\Big] \nonumber\\
\leq& \frac{1}{\rho}\mathbb{E}\Big[\int_t^T\|p^{n+1}(s,x)-p^n(s,x)\|_H^2\Big]+\rho \mathbb{E}\Big[\int_t^T\|F^{n}(s)-F^{n-1}(s)\|_H^2ds\Big] \nonumber\\
\leq& \frac{1}{\rho}\mathbb{E}\Big[\int_t^T\|p^{n+1}(s,x)-p^n(s,x)\|_H^2ds\Big]+ \rho C\mathbb{E}\Big[\int_t^T\|q^{n}(s,x)-q^{n-1}(s,x)\|_H^2\nonumber\\
&+\|r^{n}(s,x,\cdot)-r^{n-1}(s,x,\cdot)\|_{L_{\nu}(H)}^2+\|\overline{q}^{n+1}(s+\delta,x)-\overline{q}^n(s+\delta,x)\|^2_{H}\nonumber\\
&+\|\overline{\gamma}^{n+1}(s+\delta,x,\cdot)-\overline{\gamma}^n(s+\delta,x,\cdot)\|^2_{H}ds\Big]\nonumber\\
\leq&\frac{1}{\rho}\mathbb{E}\Big[\int_t^T\|p^{n+1}(s,x)-p^n(s,x)\|_H^2ds\Big]+\rho C\mathbb{E}\left[\int_t^TL^n(s)ds\right].
\end{align}
Taking expectation on both sides of \eqref{pr1}, and applying both \eqref{7} and \eqref{pr2}, we have
\begin{align}\label{pr8}
&\mathbb{E}\|p^{n+1}(t,x)-p^n(t,x)\|_H^2 \nonumber\\
\leq& \mathbb{E}\left[\int_t^T\alpha_2\|p^{n+1}(s,x)-p^n(s,x)\|_H^2-\alpha_1\|p^{n+1}(s,x)-p^n(s,x)\|_V^2ds\right]\nonumber\\
&+\frac{1}{\rho}\mathbb{E}\left[\int_t^T\|p^{n+1}(s,x)-p^n(s,x)\|_H^2ds\right]-\mathbb{E}\left[\int_t^T\|q^{n+1}(s,x)-q^n(s,x)\|_H^2ds\right]\nonumber\\
&+\rho C\mathbb{E}\left[\int_t^TL^n(s)ds\right]-\mathbb{E}\left[\int_t^T\|r^{n+1}(s,x,\cdot)-r^n(s,x,\cdot)\|_{L_{\nu}(H)}^2ds\right]\nonumber\\
\leq& (\frac{1}{\rho}+\alpha_2)\mathbb{E}\left[\int_t^T\|p^{n+1}(s,x)-p^n(s,x)\|_H^2ds\right]+\rho C\mathbb{E}\left[\int_t^TL^n(s)ds\right]\nonumber\\
&-\mathbb{E}\left[\int_t^TL^{n+1}(s)ds\right]-\alpha_1\mathbb{E}\left[\int_t^T\|p^{n+1}(s,x)-p^n(s,x)\|_V^2ds\right].
\end{align}
We can choose a $\rho>0$ such that
\begin{align}\label{pr4}
&\mathbb{E}\|p^{n+1}(t,x)-p^n(t,x)\|_H^2+\mathbb{E}\left[\int_t^TL^{n+1}(s)ds\right]+\alpha_1\mathbb{E}\left[\int_t^T\|p^{n+1}(s,x)-p^n(s,x)\|_V^2ds\right]\nonumber\\ \leq&\frac{1}{2}\mathbb{E}\left[\int_t^TL^n(s)ds\right]+\alpha_3\mathbb{E}[\int_t^T\|p^{n+1}(s,x)-p^n(s,x)\|_H^2ds].
\end{align}
where $\alpha_3=\alpha_2+\frac{1}{\rho}$.
Multiplying by $e^{\alpha_3 t}$ and integrating both sides in $[0,T]$, we have
\begin{align*}
&\int_0^Te^{\alpha_3 t}\mathbb{E}\left[\|p^{n+1}(t,x)-p^n(t,x)\|_H^2-\alpha_3\int_t^T\|p^{n+1}(s,x)-p^n(s,x)\|_H^2ds\right]dt\nonumber\\
&+\int_0^Te^{\alpha_3 t}\mathbb{E}\left[\int_t^T L^{n+1}(s)ds\right]dt\leq\frac{1}{2}\int_0^Te^{\alpha_3 t}\mathbb{E}\left[\int_t^T L^n(s)ds\right]dt
\end{align*}
and so
\begin{equation}\label{10}
\|p^{n+1}(t,x)-p^n(t,x)\|_{H_T}^2+\int_0^Te^{\alpha_3 t}\mathbb{E}\left[\int_t^T L^{n+1}(s)ds\right]dt\leq \frac{1}{2}\int_0^Te^{\alpha_3 t}\mathbb{E}\left[\int_t^T L^n(s)ds\right]dt.
\end{equation}
In particular,
\begin{equation}\label{9}
\int_0^Te^{\alpha_3 t}\mathbb{E}\left[\int_t^T L^{n+1}(s)ds\right]dt\leq \frac{1}{2}\int_0^Te^{\alpha_3 t}\mathbb{E}\left[\int_t^T L^n(s)ds\right]dt\leq C(\frac{1}{2})^n.
\end{equation}

{\bf Step 3: Existence}

Substituting \eqref{9} into \eqref{10}, one has
$$
\|p^{n+1}(t,x)-p^n(t,x)\|_{H_T}^2\leq C(\frac{1}{2})^n.
$$
It follows from \eqref{pr4} that
\begin{align*}
\mathbb{E}\left[\int_t^T L^{n+1}(s)ds\right]&\leq C(\frac{1}{2})^n+\frac{1}{2}\mathbb{E}\left[\int_t^T L^{n}(s)ds\right]\\
&\leq C(\frac{1}{2})^n+C(\frac{1}{2})^n+\frac{1}{2^2}\mathbb{E}\left[\int_t^T L^{n-1}(s)ds\right]\\
&\leq \frac{nC}{2^n}+\frac{1}{2^n}\mathbb{E}\left[\int_t^TL^{1}(s)ds\right]\\
&\leq \frac{nC}{2^n}+\frac{C}{2^n}=\frac{n(C+1)}{2^n}.
\end{align*}
Letting $n\rightarrow\infty$, we have
\begin{equation}\label{+7}
0=\lim \limits_{n\rightarrow\infty}\mathbb{E}\left[\int_t^T L^{n+1}(s)ds\right].
\end{equation}
Substituting \eqref{+7} into \eqref{pr4} yields
$$
\lim \limits_{n\rightarrow\infty}\mathbb{E}\left[\int_t^T\|p^{n+1}(s,x)-p^n(s,x)\|_V^2ds\right]=0.
$$
Therefore, for $(t,x)\in[0,T]\times\overline{D}$, the sequence $\{(p^n,q^n,r^n(\cdot))\in V\times H\times L^2_{\nu}(H)\}$ converges to $(p,q,r(\cdot))\in V\times H\times L^2_{\nu}(H)$. Letting $n\rightarrow\infty$ in \eqref{BSPDE n}, we can see that the limit $(p,q,r(\cdot))=\lim \limits_{n\rightarrow\infty}(p^n,q^n,r^n(\cdot))$ is indeed the solution to \eqref{+13} on $(t,x)\in[0,T]\times\overline{D}$.

{\bf Step 4: Uniqueness}

Suppose that $(p,q,r(\cdot))$, $(p^{(0)},q^{(0)},r^{(0)}(\cdot))$ are two solutions to \eqref{+13}. As the same arguments in {\bf Step 2},
we see that
\begin{align*}
&\mathbb{E}\|p(t,x)-p^{(0)}(t,x)\|_H^2+\alpha_1\mathbb{E}\left[\int_t^T\alpha_1\|p(s,x)-p^{(0)}(s,x)\|_V^2ds\right]+\frac{1}{2}\mathbb{E}\bigg[\int_t^T\|p(t,x)-p^{(0)}(t,x)\|_H^2\\
&+\|q(s,x)-q^{(0)}(s,x)\|_H^2+\|r(s,x,\cdot)-r^{(0)}(s,x,\cdot)\|_H^2ds\bigg]\leq\alpha_3\mathbb{E}\left[\int_t^T\|p(s,x)-p^{(0)}(s,x)\|_H^2ds\right].
\end{align*}
It follows that
$$
\mathbb{E}\|p(t,x)-p^{(0)}(t,x)\|_H^2\leq  \alpha_3\mathbb{E}\left[\int_t^T\|p(s,x)-p^{(0)}(s,x)\|_H^2ds\right].
$$
By Gronwall's Lemma, we know that $\mathbb{E}\|p(t,x)-p^{(0)}(t,x)\|_H^2=0$ and $p(t,x)=p^{(0)}(t,x)$ a.s. and so
\begin{align*}
&\mathbb{E}\left[\int_t^T\alpha_1\|p(s,x)-p^{(0)}(s,x)\|_V^2ds\right]+\frac{1}{2}\mathbb{E}\left[\int_t^T\|q(s,x)-q^{(0)}(s,x)\|_H^2ds\right] \\
&+\frac{1}{2}\mathbb{E}\left[\int_t^T\|r(s,x,\cdot)-r^{(0)}(s,x,\cdot)\|_{L_{\nu}(H)}^2ds\right]\leq 0,
\end{align*}
which implies $q(t,x)=q^{(0)}(t,x)$ and $r(t,x,\cdot)=r^{(0)}(t,x,\cdot)$ a.s..

{\bf Step 5: General case}

Consider the following iteration with general driver $F$:
\begin{equation}\label{BSPDE n2}
\begin{cases}
dp^{n+1}(t,x)&=-A_xp^{n+1}(t,x)dt+\mathbb{E}\Big[F\left(t,p^n(t,x),\overline{p}^n(t+\delta,x),q^{n+1}(t,x),\overline{q}^{n+1}(t+\delta,x),\right.\\
&\quad\left.r^{n+1}(t,x,\cdot),\overline{r}^{n+1}(t+\delta,x,\cdot)\right)\Big|\mathscr{F}_t\Big]dt+q^{n+1}(t,x)dB_t+\int_{\mathbb{R}_0}r^{n+1}(t,x,\zeta)\widetilde{N}(dt,d\zeta),\\
&\qquad\qquad\qquad\qquad\qquad\qquad\qquad\qquad\qquad\qquad\qquad\;\;(t,x)\in[0,T]\times D;\\
p^{n+1}(t,x)&=\zeta(t,x),\qquad\quad\qquad\qquad\quad\:\:\qquad\qquad\qquad\qquad\qquad (t,x)\in[T,T+\delta]\times \overline{D};\\
p^{n+1}(t,x)&=\theta(t,x), \qquad\qquad\qquad\qquad\:\;\qquad\qquad\qquad\qquad\qquad (t,x)\in[0,T)\times \partial D;\\
q^{n+1}(t,x)&=0, \qquad\qquad\qquad\qquad\qquad\;\;\,\qquad\qquad\qquad\qquad\qquad (t,x)\in[T,T+\delta]\times \overline{D};\\
r^{n+1}(t,x,\cdot)&=0, \qquad\qquad\qquad\qquad\qquad\;\;\,\qquad\qquad\qquad\qquad\qquad (t,x)\in[T,T+\delta]\times \overline{D},
\end{cases}
\end{equation}
where $p^0(t,x)=0$. Similar to the proofs of {\bf Steps 1-2}, we can easily obtain the following inequality:
\begin{align*}
&\mathbb{E}\|p^{n+1}(t,x)-p^n(t,x)\|_H^2+(1-\rho C)\mathbb{E}\left[\int_t^TL^{n+1}(s)ds\right]+\alpha_1\mathbb{E}\left[\int_t^T\|p^{n+1}(s,x)-p^n(s,x)\|_V^2ds\right]\\
&-\alpha_3\mathbb{E}[\int_t^T\|p^{n+1}(s,x)-p^n(s,x)\|_H^2ds]\leq \rho C\mathbb{E}\left[\int_t^T\|p^{n}(s,x)-p^{n-1}(s,x)\|_H^2ds\right].
\end{align*}
Choosing $\rho=\frac{1}{2C}$, we have
\begin{align*}
&\mathbb{E}\|p^{n+1}(t,x)-p^n(t,x)\|_H^2-\alpha_3\mathbb{E}[\int_t^T\|p^{n+1}(s,x)-p^n(s,x)\|_H^2ds]\\
\leq&\mathbb{E}\|p^{n+1}(t,x)-p^n(t,x)\|_H^2+\frac{1}{2}\mathbb{E}\left[\int_t^TL^{n+1}(s)ds\right]+\alpha_1\mathbb{E}\left[\int_t^T\|p^{n+1}(s,x)-p^n(s,x)\|_V^2ds\right]\\
&-\alpha_3\mathbb{E}[\int_t^T\|p^{n+1}(s,x)-p^n(s,x)\|_H^2ds]\\
\leq& \frac{1}{2}\mathbb{E}\left[\int_t^T\|p^{n}(s,x)-p^{n-1}(s,x)\|_H^2ds\right].
\end{align*}
Multiplying by $e^{\alpha_3 t}$ and integrating both sides in $[0,T]$, one has
\begin{align*}
\mathbb{E}\left[\int_{\tau}^T\|p^{n+1}(s,x)-p^n(s,x)\|_H^2ds\right]
\leq& \frac{1}{2}\int_{\tau}^Te^{\alpha_3 t}\mathbb{E}\left[\int_t^T\|p^{n}(s,x)-p^{n-1}(s,x)\|_H^2dsdt\right]\\
\leq& C\mathbb{E}\left[\int_{\tau}^T\int_t^T\|p^{n}(s,x)-p^{n-1}(s,x)\|_H^2dsdt\right]\\
\leq& C\int_{\tau}^T\mathbb{E}\left[\int_{t}^T\|p^{n}(s,x)-p^{n-1}(s,x)\|_H^2ds\right]dt.
\end{align*}
Note that for any $\tau\in[0,T]$,
$$
\mathbb{E}\left[\int_{\tau}^T\|p^{2}(s,x)-p^1(s,x)\|_H^2ds\right]\leq C\int_{\tau}^T\mathbb{E}\left[\int_{0}^T\|p^{1}(s,x)-p^0(s,x)\|_H^2ds\right]\leq C^2(T-\tau).
$$
Iterating the above inequality shows that
\begin{equation}\label{+15}
\mathbb{E}\left[\int_{\tau}^T\|p^{n+1}(s,x)-p^n(s,x)\|_H^2ds\right]\leq \frac{C^{n+1}(T-\tau)^n}{n!}.
\end{equation}
Using \eqref{+15} and a similar argument as in {\bf Steps 3-4}, it can see that the limit $(p,q,r(\cdot))=\lim \limits_{n\rightarrow\infty}(p^n,q^n,r^n(\cdot))$ is indeed the solution to \eqref{+5} on $(t,x)\in[0,T]\times\overline{D}$. This ends the proof.
\end{proof}

\section{Applications}
In this section, as applications, our results obtained in the previous sections are applied to the stochastic population dynamic models with spatial-temporal dependence.

As discussed in \cite{Oksendal2005optimal}, the following SPDE is a natural model for population growth:
\begin{equation}\label{exam 0}
\begin{cases}
dX(t,x)&=\left[\frac{1}{2}\Delta X(t,x)+\alpha{X}(t,x)-u(t,x)\right]dt+\beta{X}(t,x)dB_t,\,(t,x)\in[0,T]\times D;\\
X(0,x)&=\xi(t,x), \qquad\qquad\qquad\qquad\qquad\quad\quad \qquad\qquad\qquad\;\,\;\,\;\; x\in \overline{D};\\
X(t,x)&=\eta(t,x)\geq0, \qquad\qquad\qquad\qquad\qquad\:\!\qquad\qquad\qquad\;\:\;\,\;\; (t,x)\in(0,T]\times \partial D.\\
\end{cases}
\end{equation}
Here, $X(t,x)$ is the density of a population (e.g. fish) at $(t,x)$, $u(t,x)$ is the harvesting rate at $(t,x)\in [0,T]\times \overline{D}$, and
$$
\Delta=\sum \limits_{i=1}^n \frac{\partial^2}{\partial{x_i}^2}
$$
is a Laplacian operator. This type equation is called the stochastic reaction-diffusion equation.

We improve \eqref{exam 0} to the following system:
\begin{equation}\label{exam max}
\begin{cases}
dX(t,x)&=\left[\frac{1}{2}\Delta X(t,x)-u(t,x)\right]dt+\left(\gamma_1(t,x)X(t,x)+\gamma_2(t,x)\overline{X}(t,x)\right)\left(\gamma_3(t,x)dt\right.\\
&\quad\left.+\gamma_4(t,x)dB_t+\int_{\mathbb{R}_0}\gamma_5(t,x,\zeta)\widetilde{N}(dt,d\zeta)\right],\quad\qquad\quad\:(t,x)\in[0,T]\times D;\\
X(t,x)&=\xi(t,x), \qquad\qquad\qquad\qquad\qquad\quad\quad \qquad\qquad\qquad\;\,\;(t,x)\in[-\delta,0]\times \overline{D};\\
X(t,x)&=\eta(t,x)\geq0, \qquad\qquad\qquad\qquad\qquad\:\!\qquad\qquad\qquad\;\:\;(t,x)\in(0,T]\times \partial D;\\
u(t,x)&=\beta(t,x)\geq0, \qquad\qquad\qquad\qquad\qquad\:\!\qquad\qquad\qquad\;\:\,(t,x)\in[-\delta,0]\times \overline{D},
\end{cases}
\end{equation}
where $\gamma_i(t,x)\in H_T\; (i=1,2,3,4)$, $\int_{\mathbb{R}_0}\gamma_5(t,x,\zeta)\widetilde{N}(dt,d\zeta)\in H_T$ are all given.

Now, we give two examples for the stochastic optimal control problems governed by \eqref{exam max} under different performance functionals.
\begin{example}\label{+9}
We consider the following performance functional:
\begin{equation}\label{per func}
J_0(u)=\mathbb{E}\left[\int_D\int_0^T\log \left(u(t,x)\right)dtdx+\int_Dk(x)\log \left(X(T,x)\right)dx\right].
\end{equation}
Here, $\beta \in(0,1)$ is a constant and $k(x)\in H$ is a nonnegative, $\mathscr{F}_T$-measurable process. Now, we aim to find $\widehat{u}(t,x)\in \mathcal{U}^{ad}$ such that
$$
J_0(\widehat{u})=\sup \limits_{u\in\mathcal{U}^{ad}}J_0(u).
$$
Since the Laplacian operator $\Delta$ is self-adjoint,  the Hamiltonian functional associated to this problem takes the following form
\begin{align*}
H(t,x,S,z,u,p,q,r(\cdot))=&\log u(t,x)+[\gamma_1(t,x)\gamma_3(t,x)X(t,x)+\gamma_2(t,x)\gamma_3(t,x)\overline{X}(t,x)-u(t,x)]p(t,x)\\
&+\gamma_4(t,x)[\gamma_1(t,x)X(t,x)+\gamma_2(t,x)\overline{X}(t,x)]q(t,x)\\
&+\int_{\mathbb{R}_0}\gamma_5(t,x,\zeta)[\gamma_1(t,x)X(t,x)+\gamma_2(t,x)\overline{X}(t,x)]r(t,x,\zeta)\nu(d\zeta),
\end{align*}
where $(p,q,r(\cdot))$ is the unique solution to the following BSPDE:
\begin{equation}\label{exam e}
\begin{cases}
dp(t,x)=&-\Big[\frac{1}{2}\Delta p(t,x)+\left(\gamma_1(t,x)+\gamma_2(t,x){\nabla}^{*}_{S}(t,x)\right)\left(\gamma_3(t,x)p(t,x)+\gamma_4(t,x)q(t,x)\right.\\
&\left.+\int_{\mathbb{R}_0}\gamma_5(t,x,\zeta)r(t,x,\zeta)\nu(d\zeta)\right),\qquad\qquad\qquad\qquad\qquad\quad(t,x)\in[0,T]\times D;\\
p(t,x)=&\frac{k(x)}{X(T,x)},\qquad\qquad\qquad\qquad\qquad\qquad\,\qquad\qquad\qquad\qquad\quad\:\: (t,x)\in[T,T+\delta]\times \overline{D};\\
p(t,x)=&0, \qquad\qquad\qquad\qquad\qquad\quad\;\;\qquad\qquad\qquad\qquad\qquad\: \qquad (t,x)\in[0,T)\times \partial D;\\
q(t,x)=&0, \qquad\qquad\qquad\qquad\qquad\quad\;\;\qquad\qquad\qquad\qquad\qquad\: \qquad (t,x)\in[T,T+\delta]\times \overline{D};\\
r(t,x,\cdot)=&0, \qquad\qquad\qquad\qquad\qquad\quad\qquad\qquad\qquad\qquad\qquad\quad\;\;\;\quad (t,x)\in[T,T+\delta]\times \overline{D}.
\end{cases}
\end{equation}
Here ${\nabla}^{*}_{S}$ has been given in Example \ref{exam1}. By Theorems \ref{sufficient} and \ref{thm nece}, the optimal control $\widehat{u}$ satisfies
$$
\widehat{u}(t,x)=\frac{1}{\widehat{p}(t,x)},
$$
where $(\widehat{p},\widehat{q},\widehat{r}(\cdot))$ is the unique solution to \eqref{exam e} for $u=\widehat{u}$ and $X=\widehat{X}$.
\end{example}

\begin{example}\label{+10}
We modify \eqref{per func} to the following performance functional:
\begin{equation}\label{per func2}
J_1(u)=\mathbb{E}\left[\frac{1}{\beta}\int_D\int_0^Tu^{\beta}(t,x)dtdx+\int_Dk(x)X(T,x)dx\right].
\end{equation}
Here, $\beta \in(0,1)$ is a constant and $k(x)\in H_{T}$ is a nonnegative, $\mathscr{F}_T$-measurable process. Now, we aim to find $\widehat{u}(t,x)\in \mathcal{U}^{ad}$ such that
$$
J_1(\widehat{u})=\sup \limits_{u\in\mathcal{U}^{ad}}J_1(u).
$$
The associated Hamiltonian functional in this example becomes the following form
\begin{align*}
H(t,x,S,z,u,p,q,r(\cdot))=&\frac{1}{\beta} u^{\beta}(t,x)+[\gamma_1(t,x)\gamma_3(t,x)X(t,x)+\gamma_2(t,x)\gamma_3(t,x)\overline{X}(t,x)-u(t,x)]p(t,x)\\
&+\gamma_4(t,x)[\gamma_1(t,x)X(t,x)+\gamma_2(t,x)\overline{X}(t,x)]q(t,x)\\
&+\int_{\mathbb{R}_0}\gamma_5(t,x,\zeta)[\gamma_1(t,x)X(t,x)+\gamma_2(t,x)\overline{X}(t,x)]r(t,x,\zeta)\nu(d\zeta),
\end{align*}
where $(p,q,r(\cdot))$ is the unique solution to the following BSPDE:
\begin{equation}\label{exam eq2}
\begin{cases}
dp(t,x)=&-\Big[\frac{1}{2}\Delta p(t,x)+\left(\gamma_1(t,x)+\gamma_2(t,x){\nabla}^{*}_{S}(t,x)\right)\left(\gamma_3(t,x)p(t,x)+\gamma_4(t,x)q(t,x)\right.\\
&\left.+\int_{\mathbb{R}_0}\gamma_5(t,x,\zeta)r(t,x,\zeta)\nu(d\zeta)\right),\qquad\qquad\qquad\qquad\qquad\quad(t,x)\in[0,T]\times D;\\
p(t,x)&=k(x),\qquad\qquad\qquad\qquad\qquad\qquad\,\qquad\qquad\qquad\qquad\quad\, (t,x)\in[T,T+\delta]\times \overline{D};\\
p(t,x)&=0, \qquad\qquad\qquad\qquad\qquad\quad\;\;\qquad\qquad\qquad\qquad\qquad\quad\,  (t,x)\in[0,T)\times \partial D;\\
q(t,x)&=0, \qquad\qquad\qquad\qquad\qquad\quad\;\;\qquad\qquad\qquad\qquad\qquad\quad\,  (t,x)\in[T,T+\delta]\times \overline{D};\\
r(t,x,\cdot)&=0, \qquad\qquad\qquad\qquad\qquad\quad\;\;\qquad\qquad\qquad\qquad\qquad\quad\,  (t,x)\in[T,T+\delta]\times \overline{D}.
\end{cases}
\end{equation}
Here ${\nabla}^{*}_{S}$ has been given in Example \ref{exam1}. By Theorems \ref{sufficient} and \ref{thm nece}, the optimal control $\widehat{u}$ satisfies
$$
\widehat{u}(t,x)=(\widehat{p}(t,x))^{\frac{1}{\beta-1}},
$$
where $(\widehat{p},\widehat{q},\widehat{r}(\cdot))$ is the unique solution to \eqref{exam eq2} for $u=\widehat{u}$ and $X=\widehat{X}$.
\end{example}

\begin{remark}
\begin{enumerate}[($\romannumeral1$)]\mbox{}
\item If we take $\gamma_1(t,x)=1$, $\gamma_2(t,x)=\gamma_5(t,x,\zeta)=0$ $\gamma_3(t,x)=\gamma_3$, $\gamma_4(t,x)=\gamma_4$ and $k(x)=k>0$ in \eqref{exam max}, Example \ref{+10} reduces to Example 3.1 in \cite{Oksendal2005optimal}
\item If we take $\delta=0$, $\gamma_1(t,x)=0$, $\gamma_i(t,x)=\gamma_i\;(i=2,3,4)$, $\gamma_5(t,x,\zeta)=\gamma_5(\zeta)$, $\overline{X}(t,x)=S_1({X}(t,x))$ and $Q_1(x,y)=\frac{1}{V(R_{\theta})}$ in \eqref{exam max}, where $S_1$, $Q_1$ are represented in Example \ref{example}, Example \ref{+10} reduces to Optimal Harvesting (II) in \cite{agram2019spdes}. In addition, if $k(x)=1$, then Example \ref{+9} reduces to Optimal Harvesting (I) in \cite{agram2019spdes}.
\end{enumerate}
\end{remark}


\end{document}